\theoremstyle{plain}
\newtheorem{theorem}{\textbf{Theorem}}[section]
\newtheorem{corollary}[theorem]{Corollary}
\newtheorem{proposition}[theorem]{Proposition}
\newtheorem{remark}[theorem]{\textbf{Remark}}
\newtheorem{definition}[theorem]{Definition}
\newtheorem{example}[theorem]{\textbf{Example}}
\numberwithin{equation}{section}
\begin{document}

\title[A new approach of couple Fixed Point results on JS-metric Spaces ]
{A new approach of couple Fixed Point results on JS-metric Spaces}
\author[ T. Senapati \& L.K. Dey]%
{Tanusri Senapati$^{1}$ and  Lakshmi Kanta Dey$^{2}$}


\address{{$^{1}$\,} Tanusri Senapati,
                    Department of Mathematics,
                    National Institute of Technology Durgapur,
                    West Bengal,
                    India.}
                    \email{senapati.tanusri@gmail.com}

\address{{$^{2}$\,} Lakshmi Kanta Dey,
                    Department of Mathematics,
                    National Institute of Technology Durgapur,
                    West Bengal,
                    India.}
                    \email{lakshmikdey@yahoo.co.in}



 \subjclass[2010]{ $47$H$10$, $54$E$50$. }
 \keywords {  JS-metric space; Partially ordered set; Coupled fixed point.}
\begin{abstract}
In this article, we study coupled fixed point theorems in newly appeared JS-metric spaces. It is important to note that  the class of JS-metric spaces includes standard metric space, dislocated metric space, b-metric space etc. The purpose of this paper is to present several coupled fixed point results in a more general way. Moreover, the techniques used in our proofs are indeed different from the comparable existing literature. Finally, we present a non-trivial example to validate our main result.
\end{abstract}

 \maketitle
 \section{\textbf{Introduction and preliminaries}}
 Throughout this article, we use usual arithmetic operations in the set of (affinely) extended real number system $\mathbb{\bar{R}}=\mathbb{R}\cup \{+\infty,-\infty\}$ and the notations have their usual meaning. Let $X$ be a nonempty set and $D:X^2\rightarrow [0,\infty]$ be a mapping. For every $x\in X$, we consider the set $C(D,X,x)$ (see, \cite{jleli}) as follows:
\begin{equation}
C(D,X,x)=\{(x_n)\subset X: \lim_{n\rightarrow \infty}D(x_n,x)=0 \}.\nonumber
\end{equation}
Very recently, Jleli and Samet \cite{jleli} introduced an interesting generalization of metric space in the following way. 
\begin{definition} \cite{jleli}\label{dfn1}
Let $X$ be a nonempty set and $D:X^2\rightarrow [0,\infty]$ be a mapping. Then $(X,D)$ is said to be generalized metric space if the following conditions are satisfied:
\begin{enumerate}
\item[(D1)] $\forall x,y \in X, D(x,y)=0 \Rightarrow x=y$;
\item[(D2)] $\forall x,y \in X, D(x,y)=D(y,x)$;
\item[(D3)] there exists $c>0$ such that for all $(x,y) \in X^2$ and $(x_n)\in C(D,X,x)$
\begin{equation}
D(x,y)\leq c\limsup_{n\rightarrow \infty} D(x_n,y).\nonumber
\end{equation}
\end{enumerate}
If $C(D,X,x)=\phi$, then $(X,D)$ is generalized metric space if $D$ satisfies $(D1-D2)$. 
\end{definition} 
Throughout  this article, we call this metric space as a `\textbf{\textit{JS-metric space}}' (due to Jleli and Samet). The authors of \cite{jleli} reported that different abstract spaces such as standard metric space, dislocated metric space, b-metric space, modular space etc.  can be derived from their newly introduced metric space. They also established several fixed point results for the mappings satisfying famous Banach's contraction, $\acute{C}$iri$\acute{c}$ quasi-contraction, Banach's contraction in partially ordered metric spaces etc.  Inspirited by their work, we studied and established some more important results on this structure (see, \cite{sena2}).  For the notion of convergence, Cauchy sequence, completeness and other topological details, the readers are refereed  to see \cite{jleli} and \cite{sena2}.

In another direction, Bhaskar and Lakshmikantham \cite{bhas} introduced the concept of coupled fixed point in the setting of partially ordered metric space as follow:
\begin{definition}\cite{bhas}
An element $(x,y)\in X^2$ is said to be coupled fixed point of $F: X^2\rightarrow X$ if $x=F(x,y)$ and $y=F(y,x)$.
\end{definition}
They also introduced the concept of a mixed monotone operator given by:
\begin{definition}\cite{bhas}
Let $(X,\leq)$ be a partially ordered set and $F:X^2\rightarrow X$ be a function. Then $F$ is said to have mixed monotone property if $F$ has the following property: 
$$x_1\leq x_2\Rightarrow F(x_1,y)\leq F(x_2,y);\forall x_1,x_2,y\in X $$
and $$y_1\leq y_2\Rightarrow F(x,y_1)\geq F(x,y_2);\forall x,y_1,y_2\in X. $$
\end{definition}
Using this concept, the authors of \cite{bhas} presented the following result in support of the existence of coupled fixed point of an operator satisfying mixed monotone property in partially ordered complete metric spaces. 
\begin{theorem}\cite{bhas}
Let $(X,\leq)$ be a partially ordered set and $(X,d)$ be complete partially ordered metric space. Suppose $F:X^2\rightarrow X$ is a mixed monotone operator having the following properties:
\begin{equation}
d(F(x,y),F(u,v))\leq \frac{k}{2}\{d(x,u)+d(y,v)\} ~\forall x\geq u;y\leq v.\label{c1}
\end{equation}
 Also consider that there exist $x_0,y_0\in X$ with $x_0\leq F(x_0,y_0);y_0\geq F(y_0,x_0)$. If
\begin{enumerate}
\item[(A)]
$F$ is continuous or
\item[(B)]$X$ has the following property:
\begin{enumerate}
\item If a non-decreasing sequence $(x_n)\rightarrow x$ then $x_n\leq x$ for all $n\in \mathbb{N}$;
\item If a non-increasing sequence $(y_n)\rightarrow y$ then $y_n\geq y$ for all $n\in \mathbb{N}$
\end{enumerate}
\end{enumerate}
then there exist $x,y \in X$ such that $x=F(x,y)$ and $y=F(y,x)$.
\end{theorem}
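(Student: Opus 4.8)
The plan is to run the standard double Picard iteration and exploit the mixed monotone structure. First I would fix the given $x_0,y_0$ and set $x_{n+1}=F(x_n,y_n)$, $y_{n+1}=F(y_n,x_n)$ for all $n\ge 0$. The hypotheses $x_0\le F(x_0,y_0)=x_1$ and $y_0\ge F(y_0,x_0)=y_1$ provide the base case, and an induction using the mixed monotone property would establish that $(x_n)$ is non-decreasing and $(y_n)$ is non-increasing. The only delicate bookkeeping here is the order reversal in the second variable of $F$.

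Next I would extract a contraction on the combined displacement. Since $x_n\ge x_{n-1}$ and $y_n\le y_{n-1}$, the estimate \eqref{c1} applies to both of the relevant pairs; writing $\delta_n=d(x_{n+1},x_n)+d(y_{n+1},y_n)$ and adding the two inequalities so obtained, the cross terms recombine to yield $\delta_n\le k\,\delta_{n-1}$, hence $\delta_n\le k^n\delta_0$. As $k<1$, both $d(x_{n+1},x_n)$ and $d(y_{n+1},y_n)$ decay geometrically, so a routine triangle-inequality argument shows that $(x_n)$ and $(y_n)$ are Cauchy. Completeness of $(X,d)$ then produces $x,y\in X$ with $x_n\to x$ and $y_n\to y$.

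Finally I would identify $(x,y)$ as a coupled fixed point, splitting according to the two hypotheses. Under (A), continuity of $F$ allows me to pass to the limit directly in $x_{n+1}=F(x_n,y_n)$ and $y_{n+1}=F(y_n,x_n)$, giving $x=F(x,y)$ and $y=F(y,x)$. Under (B), the regularity property gives $x_n\le x$ and $y_n\ge y$ for all $n$, so \eqref{c1} applies to the pairs $(x,y)$ and $(x_n,y_n)$; estimating
\[d(x,F(x,y))\le d(x,x_{n+1})+d(F(x_n,y_n),F(x,y))\le d(x,x_{n+1})+\tfrac{k}{2}\{d(x,x_n)+d(y,y_n)\}\]
and letting $n\to\infty$ forces $d(x,F(x,y))=0$, whence $x=F(x,y)$; the argument for $y=F(y,x)$ is symmetric.

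I expect the main obstacle to be the orientation of \eqref{c1} at each application: since the contraction is assumed only for $x\ge u$, $y\le v$, one must verify at every step that the two arguments are comparable in precisely those directions before invoking it, and this is exactly where the monotonicity established in the first step is consumed. Once the geometric estimate $\delta_n\le k^n\delta_0$ is in hand, the Cauchy property and the limit identification are routine.
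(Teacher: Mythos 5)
Your argument is correct, and each application of \eqref{c1} is properly oriented: the induction giving $x_{n-1}\leq x_n$, $y_{n-1}\geq y_n$ licenses both the estimate on $d(x_{n+1},x_n)$ and, after swapping the roles of the two points and using symmetry of $d$, the estimate on $d(y_{n+1},y_n)$, so $\delta_n\leq k\delta_{n-1}$ holds and the rest follows. Be aware, however, that the paper states this theorem only as a cited background result of Bhaskar and Lakshmikantham and gives no proof of it; what you have written is essentially the classical proof from that source. It is worth contrasting your route with the technique the present paper uses for its generalization (Theorem \ref{thm1}), where the ambient space is a JS-metric space and the triangle inequality is unavailable. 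There, the geometric decay of successive displacements cannot be summed to get the Cauchy property, so the authors instead impose boundedness of the orbit, $\delta_F(D,(x_0,y_0))<\infty$ and $\delta_F(D,(y_0,x_0))<\infty$, and prove directly that $D(x_{n+j},x_{n+i})\leq \tfrac{k^n}{2}\{D(x_j,x_i)+D(y_j,y_i)\}\leq k^n M$ uniformly in $i,j$, which yields Cauchyness in one stroke; moreover the limit is identified using only the JS-axiom (D3), so neither continuity of $F$ nor the order-regularity hypothesis (B) is needed. Your approach buys the classical theorem with no orbit-boundedness assumption but consumes the triangle inequality and one of the alternatives (A)/(B); the paper's approach trades these for the finiteness of $\delta_F$, which is automatic in a bounded complete metric space but a genuine restriction in general.
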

Afterward, in 2011, Berinde \cite{berd} generalized the contraction condition \ref{c1} as follows:
\begin{equation}
d(F(x,y),F(u,v))+d(F(y,x),F(v,u))\leq k[d(x,u)+d(y,v)] \label{c2}
\end{equation}for all $x\geq u;y\leq v$ and established coupled fixed point for a mixed monotone operator in partially ordered complete metric spaces. For more results of fixed point and coupled fixed point, the readers may see \cite{guo,laks,stoj1,stoj2,isik1,isik2,sena1,lahiri}.

In this article, inspired by the ideas of JS-metric spaces, at first, we extend the coupled fixed point results of Bhaskar and Lakshmikantham \cite{bhas} due to contraction condition \ref{c1} for a mapping satisfying mixed monotone property in complete JS-metric spaces endowed with partial ordering. After that, we extend the coupled fixed point results due to Berinde \cite{berd} for the contraction condition \ref{c2}. It is notable that the triangular inequality, so called basic property of standard metric space is replaced by a more weaker condition in JS-metric spaces. Necessarily, the techniques used in our proofs are quite different  and most remarkably some of the proofs become  simpler. Finally  we construct a non-trivial example to substantiate  our main result.

 \section{\textbf{Main results}}
 In order to state our main results, we need to define some basic things regarding this structure. 
Let $(X,D)$ be a JS-metric space. Now we consider $X^2$ and define
$$D_+((x,y),(u,v))=D(x,u)+D(y,v)$$ for all $(x,y),(u,v)\in X^2.$ We prove that $(X^2, D_+)$
is a $D_+$-JS-metric space induced by the metric $D$.
\begin{enumerate}
\item[(i)] Let $D_+((x,y),(u,v))=0$. It implies that $D(x,u)+D(y,v)=0$. It is possible only when both $D(x,u)=0$ and $D(y,v)=0$ i.e. $x=u$ and $y=v$. Therefore, $$D_+((x,y),(u,v))=0\Rightarrow (x,y)=(u,v)$$ for all $(x,y),(u,v)\in X^2$.
\item[(ii)] Clearly, $D_+((x,y),(u,v))=D_+((u,v),(x,y))$ for all $(x,y),(u,v)\in X^2$.
\item[(iii)] Let $(x_n, y_n)\rightarrow (x,y)$ as $n\rightarrow \infty$. Then
\begin{eqnarray}
D_+((x,y),(u,v))&=&D(x,u)+D(y,v)\nonumber\\
&\leq &\limsup \{c_1D(x_n,u)+c_2D(y_n,v)\}\nonumber\\
&\leq & c_0 \limsup D_+((x_n,y_n),(u,v))\nonumber
\end{eqnarray}
where $c_0=\max\{c_1,c_2\}$.
\end{enumerate}
Thus $D_+$ satisfied all the axioms of JS-metric. Hence $(X^2,D_+)$ is a $D_+$-JS-metric space. Proceeding in this way we can define a distance function on any $n$-tuple set $X^n$ for $n\geq 2$.

Next, we define another function $D_m:X^2\rightarrow \mathbb{R^+}$ by $$D_m((x,y),(u,v))=\max\{D(x,u),D(y,v)\}.$$ Then, it can be checked that $D_m$ also satisfies  the axioms of distance function in JS-metric spaces. Hence, $(X^2,D_m)$ is also a $D_m$-JS-metric space. In similar fashion, one can define $n$-tuple $D_m$-JS-metric space for any $n\geq 2.$ In order to state our main result, the following propositions will be necessary.
\begin{proposition}\label{propos1}
 Let $(z_n)=(x_n,y_n)$ be a sequence in $(X^2, D_+)$. Suppose $(z_n)$ $D_+$- converges to $x^*=(x,y)$ and $u^*=(u,v)$. Then $x^*=u^*$.
\end{proposition}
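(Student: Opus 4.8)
The plan is to exploit property (D3) of the induced JS-metric $D_+$, which was verified in step (iii) above, together with the first axiom (i) that separates distinct points. The entire argument hinges on a single observation: $D_+$-convergence of $(z_n)$ to $x^*$ is, by definition, exactly the statement that $(z_n)\in C(D_+,X^2,x^*)$, so $x^*$ is an admissible base point for the limit-supremum inequality in (D3), with $u^*$ as the fixed second argument.

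First I would record the two convergence hypotheses explicitly, namely $\lim_{n\to\infty}D_+(z_n,x^*)=0$ and $\lim_{n\to\infty}D_+(z_n,u^*)=0$. The second of these immediately gives $\limsup_{n\to\infty}D_+(z_n,u^*)=0$, since the limit exists and equals $0$. Next, applying the JS-metric axiom (D3) for $D_+$ with base point $x^*$, approaching sequence $(z_n)\in C(D_+,X^2,x^*)$, and target point $u^*$, I obtain
$$D_+(x^*,u^*)\le c_0\,\limsup_{n\to\infty}D_+(z_n,u^*),$$
where $c_0=\max\{c_1,c_2\}$ is the constant produced in step (iii).

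Combining these, the right-hand side vanishes, forcing $D_+(x^*,u^*)=0$. Finally, by axiom (i) for $D_+$ (the $D_+$-analogue of (D1)), the equality $D_+(x^*,u^*)=0$ yields $x^*=u^*$, completing the argument. I do not anticipate any serious obstacle here: the only point requiring genuine care is matching the roles in (D3) correctly, since the convergent sequence must be taken from $C(D_+,X^2,x^*)$ while $u^*$ serves as the fixed target — once this is set up, the estimate collapses in a single line and uniqueness of the $D_+$-limit follows at once.
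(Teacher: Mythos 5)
Your argument is correct and is essentially the paper's own proof: both apply axiom (D3) of the induced metric $D_+$ with $(z_n)\in C(D_+,X^2,x^*)$ and target $u^*$, note that convergence to $u^*$ forces $\limsup_{n\to\infty}D_+(z_n,u^*)=0$, and conclude $x^*=u^*$ from the separation axiom (D1). No issues.
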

\begin{proof}
\begin{eqnarray}
D_+((x,y),(u,v))&\leq & c\limsup D_+((x_n,y_n),(u,v))\nonumber\\
& \leq & c\limsup \{D(x_n,u)+D(y_n,v)\}=0\nonumber\\
\Rightarrow (x,y)=(u,v).\nonumber
\end{eqnarray}
\end{proof}
\begin{proposition}\label{propos2}
 Let $(x_n)$ be a convergent sequence in $(X,D)$, converges to $x\in X$. Then $D(x,x)=0$.
\end{proposition}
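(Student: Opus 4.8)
The plan is to obtain the conclusion directly from axiom (D3) by specializing its second argument to $x$ itself. First I would unpack what convergence means in this setting: to say that $(x_n)$ converges to $x$ is precisely to say that $\lim_{n\to\infty} D(x_n,x)=0$, that is, $(x_n)\in C(D,X,x)$. This is essentially the only hypothesis available, so the argument must route through it.

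Next I would invoke (D3) with the pair $(x,x)\in X^2$ together with the sequence $(x_n)$, which is a legitimate choice precisely because $(x_n)$ converges to the first coordinate $x$ and therefore lies in $C(D,X,x)$. This gives
$$D(x,x)\le c\,\limsup_{n\to\infty} D(x_n,x).$$
Since $\lim_{n\to\infty} D(x_n,x)=0$ by the assumed convergence, the $\limsup$ on the right-hand side equals $0$, whence $D(x,x)\le c\cdot 0=0$. As $D$ takes values in $[0,\infty]$, this forces $D(x,x)=0$, as claimed.

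There is no genuine obstacle in this proposition; its whole content is the correct specialization $y:=x$ in (D3) combined with the elementary fact that a nonnegative sequence tending to $0$ has $\limsup$ equal to $0$. The single point deserving a moment's care is the admissibility of the sequence in (D3): one must verify that $(x_n)$ belongs to $C(D,X,x)$ for the \emph{same} point $x$ occupying the first slot of $D(x,x)$, and this is exactly what convergence of $(x_n)$ to $x$ provides.
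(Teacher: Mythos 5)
Your proof is correct and coincides with the paper's own argument: both apply axiom (D3) with $y:=x$ and the sequence $(x_n)\in C(D,X,x)$ to get $D(x,x)\le c\limsup_{n\to\infty}D(x_n,x)=0$. Your write-up is simply a more careful spelling-out of the same one-line computation.
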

\begin{proof}
By the hypothesis of JS-metric spaces, we can find some $c>0$ such that $$D(x,x)\leq c \limsup_{n\rightarrow \infty} D(x,x_n)=0.$$
\end{proof}
If $(X,D)$ is a complete JS-metric space then one can easily show that $(X^2, D_+)$ and $(X^2, D_m)$ are complete, too.
Let us consider $(x,y)\in X^2$. We define
$$\delta_F(D,(x,y))=\sup\{D(F^i(x,y),F^j(x,y)):i,j\in \mathbb{N}\}$$and
$$\delta_F(D,(y,x))=\sup\{D(F^i(y,x),F^j(y,x)):i,j\in \mathbb{N}\}.$$
Throughout this article, we assume the partial order $`\leq$' on $X^2$ as follows:
$$(u,v)\leq (x,y)\Leftrightarrow u\leq x, v\geq y $$ for all $x,y,u,v\in X$ and we consider $(X^2, D_+)$ as partially ordered complete $D_+$-JS-metric space.
Now, we are in a position to state our main results.
 \subsection{Coupled fixed point theorems in partially ordered JS-metric spaces}
 In this section, we prove the existence and then uniqueness of coupled fixed point for the mappings satisfying mixed monotone property.  
\begin{theorem}\label{thm1}
Let $F:X^2\rightarrow X$ be a mapping with mixed monotone property on $X$. Assume that there exists $k\in [0,1)$ such that $$D(F(x,y),F(u,v))\leq \frac{k}{2}D_+((x,y),(u,v))$$ for $x\geq u;y\leq v$.   If there exist $x_0,y_0\in X$ such that
\begin{enumerate}
\item[(A)] $x_0\leq F(x_0,y_0);y_0\geq F(y_0,x_0)$;
\item[(B)] $\delta_F(D,(x_0,y_0))<\infty$ and $\delta_F(D,(y_0,x_0))<\infty$
\end{enumerate}
then there  exist $x,y\in X$ such that $x=F(x,y);y=F(y,x)$.
\end{theorem}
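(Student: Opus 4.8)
The plan is to run the natural Picard-type iteration for the coupled map and then adapt the Cauchy argument to the JS setting, where the absence of a triangle inequality forces us to exploit hypothesis (B) instead of summing a geometric series. Concretely, starting from the given $x_0,y_0$, I would set $x_{n+1}=F(x_n,y_n)$ and $y_{n+1}=F(y_n,x_n)$, so that $x_n=F^n(x_0,y_0)$ and $y_n=F^n(y_0,x_0)$. Using the mixed monotone property together with the initial inequalities (A), an induction shows $(x_n)$ is non-decreasing and $(y_n)$ is non-increasing: indeed $x_0\le F(x_0,y_0)=x_1$ and $y_0\ge F(y_0,x_0)=y_1$ start it, and if $x_{n-1}\le x_n$ and $y_{n-1}\ge y_n$ then monotonicity in each slot gives $x_n=F(x_{n-1},y_{n-1})\le F(x_n,y_n)=x_{n+1}$ and similarly $y_{n+1}\le y_n$. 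In the product order this says $(x_m,y_m)$ and $(x_n,y_n)$ are comparable whenever $n\ge m$, which is exactly what is needed to invoke the contraction.

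Next, for $n\ge m$ I would apply the contraction to both sequences. Since $x_n\ge x_m$ and $y_n\le y_m$, the hypothesis gives $D(x_{n+1},x_{m+1})=D(F(x_n,y_n),F(x_m,y_m))\le\frac{k}{2}\{D(x_n,x_m)+D(y_n,y_m)\}$, and, after using symmetry (D2) to reorder the arguments, the same bound holds for $D(y_{n+1},y_{m+1})$. Writing $s_{n,m}=D(x_n,x_m)+D(y_n,y_m)$ and adding the two inequalities yields $s_{n+1,m+1}\le k\,s_{n,m}$, hence by iteration $s_{n,m}\le k^{\min(n,m)}s_{|n-m|,0}$. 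This is the step where (B) enters: since $s_{|n-m|,0}=D(x_{|n-m|},x_0)+D(y_{|n-m|},y_0)\le\delta_F(D,(x_0,y_0))+\delta_F(D,(y_0,x_0))=:M<\infty$, we obtain $s_{n,m}\le k^{\min(n,m)}M\to 0$. The finiteness of the orbit diameters is precisely what replaces the usual triangle-inequality tail estimate, and it forces $\lim_{n,m\to\infty}D(x_n,x_m)=0$ and $\lim_{n,m\to\infty}D(y_n,y_m)=0$, so $(x_n)$ and $(y_n)$ are Cauchy in $(X,D)$.

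By completeness of $(X,D)$ there are $x,y\in X$ with $D(x_n,x)\to 0$ and $D(y_n,y)\to 0$, and by Proposition \ref{propos2} also $D(x,x)=D(y,y)=0$. It remains to identify $(x,y)$ as a coupled fixed point. The route I would take is to show $D(x_{n+1},F(x,y))\to 0$ and then invoke uniqueness of limits (the $D$-analogue of Proposition \ref{propos1}) to conclude $x=F(x,y)$, and symmetrically $y=F(y,x)$. Provided the iterates remain comparable to their limit, this is immediate from the contraction, since then $D(x_{n+1},F(x,y))=D(F(x_n,y_n),F(x,y))\le\frac{k}{2}\{D(x_n,x)+D(y_n,y)\}\to 0$.

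I expect this last step to be the main obstacle. Establishing the comparability $x_n\le x$ and $y_n\ge y$ of the monotone iterates with their limits is not automatic in an abstract partially ordered JS-metric space, so one must supply it, either through continuity of $F$ (whence $x_{n+1}=F(x_n,y_n)\to F(x,y)$ directly, forcing $F(x,y)=x$ by uniqueness of limits) or through an order--limit (``regularity'') property of $X$ asserting that limits of non-decreasing sequences dominate all their terms and limits of non-increasing sequences are dominated by all their terms. A secondary technical point to watch throughout is that, because (D3) supplies only a one-sided $\limsup$-control with a multiplicative constant rather than a genuine triangle inequality, every passage to a limit inside $D$ must be routed through (D3) or through the uniqueness-of-limits proposition, never through naive continuity of the distance.
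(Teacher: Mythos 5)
Your proposal follows the paper's proof almost step for step: the same Picard iterates $x_n=F^n(x_0,y_0)$, $y_n=F^n(y_0,x_0)$, the same induction showing that for comparable indices $D(x_{n+j},x_{n+i})+D(y_{n+j},y_{n+i})\leq k^{n}\{D(x_j,x_i)+D(y_j,y_i)\}$, the same use of hypothesis (B) to bound the right-hand side by a finite orbit diameter $M$ in place of the usual geometric-series tail estimate, and the same appeal to completeness and to (D3) to identify the limit as a coupled fixed point. So in substance you have reproduced the paper's argument.

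The one point where you stop short --- the comparability of $(x_{n-1},y_{n-1})$ with the limit $(x,y)$, which is needed before the contraction can legitimately be applied to $D(F(x,y),F(x_{n-1},y_{n-1}))$ --- is a genuine issue, and you are right to flag it: the contraction hypothesis is only assumed for $x\geq u$, $y\leq v$. But note that the paper's own proof applies exactly this estimate in its final display without verifying the order relation, and the subsequent Remark even advertises that neither continuity of $F$ nor an order-regularity property of $X$ is assumed. So you have not introduced a gap; you have located one that the paper leaves unaddressed. To close it one would need precisely one of the two supplements you name (continuity of $F$, or the property that a non-decreasing convergent sequence is dominated by its limit and dually for non-increasing sequences), as in the original Bhaskar--Lakshmikantham hypotheses (A)/(B).
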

\begin{proof}
Since $F$ has mixed monotone property so we have
$$x_0\leq F(x_0,y_0)~ and~y_0\geq F(y_0,x_0).$$
Let us consider $x_1=F(x_0,y_0)$ and $y_1=F(y_0,x_0)$ and we also denote
$$F^2(x_0,y_0)=F(F(x_0,y_0),F(y_0,x_0))=F(x_1,y_1)=x_2;$$
$$F^2(y_0,x_0)=F(F(y_0,x_0),F(x_0,y_0))=F(y_1,x_1)=y_2.$$
Processing in this way, by the mixed monotone property of $F$ we can get
$$F^n(x_0,y_0)=F(F^{n-1}(x_0,y_0),F^{n-1}(y_0,x_0))=x_n;$$
$$F^n(y_0,x_0)=F(F^{n-1}(y_0,x_0),F^{n-1}(x_0,y_0))=y_n.$$
It is easy to check that $(x_n)=(F^n(x_0,y_0))$ is monotone increasing sequence and $(y_n)=(F^n(y_0,x_0))$ is monotone decreasing sequence. We claim that both the sequences are Cauchy.

  Let us consider $i,j\in \mathbb{N}$ with $i\leq j$. Then clearly $(x_i,y_i)\leq (x_j,y_j)$.
\begin{eqnarray}
D(x_{1+j},x_{1+i})& = &D(F^{1+j}(x_0,y_0),F^{1+i}(y_0,x_0))\nonumber\\
& =& D(F(F^{j}(x_0,y_0),F^{j}(y_0,x_0)),F(F^{i}(x_0,y_0),F^{i}(y_0,x_0)))\nonumber\\
& \leq & \frac{k}{2}\{D(F^{j}(x_0,y_0),F^{i}(x_0,y_0))+D(F^{j}(y_0,x_0),F^{i}(y_0,x_0))\}\nonumber\\
& \leq & \frac{k}{2}\{D(x_j,x_i)+D(y_j,y_i)\}. \label{inq1}
\end{eqnarray}
Again,
\begin{eqnarray} D(y_{1+j},y_{1+i})& = &D(F^{1+j}(y_0,x_0),F^{1+i}(y_0,x_0))\nonumber\\
& =& D(F(F^{j}(y_0,x_0),F^{j}(x_0,y_0)),F(F^{i}(y_0,x_0),F^{i}(x_0,y_0)))\nonumber\\
& \leq & \frac{k}{2}\{D(F^{j}(y_0,x_0),F^{i}(y_0,x_0))+D(F^{j}(x_0,y_0),F^{i}(x_0,y_0))\}\nonumber\\
& \leq & \frac{k}{2}\{(D(y_j,y_i))+(D(x_j,x_i))\}. \label{inq2}
\end{eqnarray}
Similarly, using the inequalities \ref{inq1} and \ref{inq2}, we obtain
\begin{eqnarray}
D(x_{2+j},x_{2+i})& = &D(F^{2+j}(x_0,y_0),F^{2+i}(y_0,x_0))\nonumber\\
& =& D(F(F^{1+j}(x_0,y_0),F^{1+j}(y_0,x_0)),F(F^{1+i}(x_0,y_0),F^{1+i}(y_0,x_0)))\nonumber\\
& \leq & \frac{k}{2}\{D(x_{1+j},x_{1+i})+D(y_{1+j},y_{1+i})\}\nonumber\\
& \leq & \frac{k^2}{2}\{D(x_j,x_i)+D(y_j,y_i)\}\nonumber
\end{eqnarray}
and
\begin{eqnarray} D(y_{2+j},y_{2+i})& = &D(F^{2+j}(y_0,x_0),F^{2+i}(y_0,x_0))\nonumber\\
& =& D(F(F^{1+j}(y_0,x_0),F^{1+j}(x_0,y_0)),F(F^{1+i}(y_0,x_0),F^{1+i}(x_0,y_0)))\nonumber\\
& \leq & \frac{k}{2}\{D(y_{1+j},y_{1+i})+D(x_{1+j},x_{1+i})\}\nonumber\\
& \leq & \frac{k^2}{2}\{D(x_j,x_i)+D(y_j,y_i)\}.\nonumber
\end{eqnarray}
Now, we assume that $$D(x_{m+j},x_{m+i}) \leq  \frac{k^m}{2}\{D(x_j,x_i)+D(y_j,y_i)\};$$
$$D(y_{m+j},y_{m+i}) \leq  \frac{k^m}{2}\{D(x_j,x_i)+D(y_j,y_i)\}$$ are hold. Then we obtain
\begin{eqnarray}
D(x_{m+1+j},x_{m+1+i})& = &D(F^{m+1+j}(x_0,y_0),F^{m+1+i}(y_0,x_0))\nonumber\\
& =& D(F(F^{m+j}(x_0,y_0),F^{m+j}(y_0,x_0)),F(F^{m+i}(x_0,y_0),F^{m+i}(y_0,x_0)))\nonumber\\
& \leq & \frac{k}{2}\{D(F^{m+j}(x_0,y_0),F^{m+i}(x_0,y_0))+D(F^{m+j}(y_0,x_0),F^{m+i}(y_0,x_0))\}\nonumber\\
& \leq & \frac{k}{2}\{D(x_{m+j},x_{m+i})+D(y_{m+j},y_{m+i})\}\nonumber\\
& \leq & \frac{k^{m+1}}{2}\{D(x_j,x_i)+D(y_j,y_i)\}
\end{eqnarray}
and
\begin{eqnarray} D(y_{m+1+j},y_{m+1+i})& = &D(F^{m+1+j}(y_0,x_0),F^{m+1+i}(y_0,x_0))\nonumber\\
& =& D(F(F^{m+j}(y_0,x_0),F^{m+j}(x_0,y_0)),F(F^{m+i}(y_0,x_0),F^{m+i}(x_0,y_0)))\nonumber\\
& \leq & \frac{k}{2}\{D(F^{m+j}(y_0,x_0),F^{m+i}(y_0,x_0))+D(F^{m+j}(x_0,y_0),F^{m+i}(x_0,y_0))\}\nonumber\\
& \leq & \frac{k}{2}\{D(y_{m+j},y_{m+i})+D(x_{m+j},x_{m+i})\}\nonumber\\
& \leq & \frac{k^{m+1}}{2}\{D(x_j,x_i)+D(y_j,y_i)\}.\nonumber
\end{eqnarray}
Therefore, we can claim that for all $n\in \mathbb{N}$, $$D(x_{n+j},x_{n+i})\leq \frac{k^{n}}{2}\{D(x_j,x_i)+D(y_j,y_i)\};$$ and $$D(y_{n+j},y_{n+i}) \leq \frac{k^{n}}{2}\{D(x_j,x_i)+D(y_j,y_i)\}.$$
By taking limit supremum of both sides of the above inequalities, we get
$$\limsup_{n \rightarrow \infty}D(x_{n+j},x_{n+i})\leq \limsup_{n \rightarrow \infty}\frac{k^{n}}{2}\{\delta_F(D,(x_0,y_0))+\delta_F(D,(y_0,x_0))\};$$
$$\limsup_{n \rightarrow \infty}D(y_{n+j},y_{n+i})\leq \limsup_{n \rightarrow \infty}\frac{k^{n}}{2}\{\delta_F(D,(x_0,y_0))+\delta_F(D,(y_0,x_0))\}.$$ Let us consider $$M=\max \{\delta_F(D,(x_0,y_0)),\delta_F(D,(y_0,x_0))\}.$$ Then from the above inequalities, it is clear that,
\begin{equation}
\limsup_{n \rightarrow \infty}D(x_{n+j},x_{n+i})\leq \limsup_{n \rightarrow \infty} k^nM.
\end{equation}
Since we choose arbitrary values of $i,j$ with $j>i$, therefore for all $p\in \mathbb{N}$, we obtain
$$\limsup_{n \rightarrow \infty}D(x_{n+p},x_{n})\leq \limsup_{n \rightarrow \infty} k^nM.$$
This implies that $(x_n)$ is a Cauchy sequence as $k\in [0,1)$ and $M$ is bounded. Analogously, we can show that $(y_n)$ is also a Cauchy sequence. As $(X^2,D_+)$ is complete so  let $(x_n,y_n)\rightarrow (x,y)\in X^2$ as $n\rightarrow \infty$.

We next claim that $(x,y)$ is coupled fixed point of $F$. Now,
\begin{eqnarray}
D(F(x,y),x) & \leq & c\limsup D(F(x,y),x_n)\nonumber\\
&\leq &c\limsup D(F(x,y),F^n(x_0,y_0))\nonumber\\
&\leq &c\limsup D(F(x,y),F(F^{n-1}(x_0,y_0),F^{n-1}(y_0,x_0))\nonumber\\
&\leq &\frac{kc}{2}\limsup \{D(x,F^{n-1}(x_0,y_0))+D(y,F^{n-1}(y_0,x_0))\}\nonumber\\
&=&0\nonumber\\
\Rightarrow F(x,y)=x.\nonumber
\end{eqnarray}
Analogously, $F(y,x)=y$.
\end{proof}
Our next results show the uniqueness of coupled fixed point.
\begin{theorem}
If $(x,y)$ and $(x^*,y^*)$ are two coupled fixed points of $F$ which are comparable and $D(x,x^*)<\infty$   and $D(y,y^*)<\infty $ then $x=x^*;y=y^*.$
\end{theorem}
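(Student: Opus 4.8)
The plan is to exploit comparability to fix a single orientation between the two fixed points, feed the contraction condition into each coordinate, and then add the resulting inequalities so that the finiteness hypotheses permit cancellation. First I would use that $(x,y)$ and $(x^*,y^*)$ are comparable in the order on $X^2$: without loss of generality assume $(x^*,y^*)\leq (x,y)$, which by the definition of the order means $x^*\leq x$ and $y^*\geq y$, i.e. $x\geq x^*$ and $y\leq y^*$. This is precisely the configuration in which the contraction of Theorem \ref{thm1} applies.

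Using the coupled fixed point equations $x=F(x,y)$ and $x^*=F(x^*,y^*)$ together with $x\geq x^*$, $y\leq y^*$, I apply the contraction to obtain
$$D(x,x^*)=D(F(x,y),F(x^*,y^*))\leq \frac{k}{2}\{D(x,x^*)+D(y,y^*)\}.$$
For the second coordinate the orientation is reversed, so I would first rewrite $D(y,y^*)=D(y^*,y)$ by the symmetry axiom (D2), and then apply the contraction to the pair $F(y^*,x^*)$ and $F(y,x)$, which is legitimate since $y^*\geq y$ and $x^*\leq x$. This yields
$$D(y,y^*)=D(F(y^*,x^*),F(y,x))\leq \frac{k}{2}\{D(y,y^*)+D(x,x^*)\}.$$

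Adding the two inequalities and setting $S=D(x,x^*)+D(y,y^*)$ gives $S\leq kS$. The hypotheses $D(x,x^*)<\infty$ and $D(y,y^*)<\infty$ ensure $S<\infty$, so $(1-k)S\leq 0$ with $k\in[0,1)$ forces $S=0$. Consequently $D(x,x^*)=0$ and $D(y,y^*)=0$, and (D1) then delivers $x=x^*$ and $y=y^*$, as required.

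The step I expect to require the most care is the orientation in the second coordinate: applying the contraction directly to $D(F(y,x),F(y^*,x^*))$ is not permitted, since the required ordering $y\geq y^*$, $x\leq x^*$ fails under our standing assumption, and it is the symmetry axiom (D2) that repairs this. Equally indispensable is the finiteness assumption, without which the inequality $S\leq kS$ is vacuous in the extended reals (it holds trivially when $S=\infty$); this is exactly why the conditions $D(x,x^*)<\infty$ and $D(y,y^*)<\infty$ must be imposed in the statement.
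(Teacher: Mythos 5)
Your proof is correct and takes essentially the same route as the paper: apply the contraction condition to each coordinate of the two fixed points, add the resulting inequalities, and use the finiteness hypotheses to cancel. The only differences are cosmetic --- you stop after a single application of the contraction (obtaining $S\leq kS$ with $S<\infty$, hence $S=0$), whereas the paper iterates to $D_+((x,y),(x^*,y^*))\leq k^{n-1}[D(x,x^*)+D(y,y^*)]$ and lets $n\rightarrow\infty$; and your explicit use of the symmetry axiom (D2) to fix the orientation in the second coordinate is a point the paper passes over silently.
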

\begin{proof}
Since $(x,y)$ and $(x^*,y^*)$ are coupled fixed point so for every $n$, $F^n(x,y)=x;F^n(y,x)=y$ and  $F^n(x^*,y^*)=x^*;F^n(y^*,x^*)=y^*.$ Now,
\begin{eqnarray}
D_+((x,y),(x^*,y^*)) &= & \{D(x,x^*)+D(y,y^*)\}\nonumber\\
&= & \{D(F^n(x,y),F^n(x^*,y^*))+D(F^n(y,x),F^n(y^*,x^*)\}\nonumber\\
&= & \{D(F(F^{n-1}(x,y),F^{n-1}(y,x)),F(F^{n-1}(x^*,y^*),F^{n-1}(y^*,x^*)))\nonumber\\
& &+D(F(F^{n-1}(y,x),F^{n-1}(x,y)),F(F^{n-1}(y^*,x^*),F^{n-1}(x^*,y^*)))\}\nonumber\\
& \leq &\frac{k}{2} [\{D(F^{n-1}(x,y),F^{n-1}(x^*,y^*))+D(F^{n-1}(y,x),F^{n-1}(y^*,x^*))\}\nonumber\\
& &+\{ D(F^{n-1}(y,x),F^{n-1}(y^*,x^*))+D(F^{n-1}(x,y),F^{n-1}(x^*,y^*))\}]\nonumber\\
& \leq & k\{D(F^{n-1}(x,y),F^{n-1}(x^*,y^*))+D(F^{n-1}(y,x),F^{n-1}(y^*,x^*))\}.\nonumber
\end{eqnarray}
Again,
\begin{eqnarray}
D(F^{n-1}(x,y),F^{n-1}(x^*,y^*))& =& D(F(F^{n-2}(x,y),F^{n-2}(y,x)),F(F^{n-2}(x^*,y^*),F^{n-2}(y^*,x^*))\nonumber\\
& \leq & \frac{k}{2}\{D(F^{n-2}(x,y),F^{n-2}(x^*,y^*))+D(F^{n-2}(y,x),F^{n-2}(y^*,x^*))\},\nonumber
\end{eqnarray}
and
\begin{eqnarray}
D(F^{n-1}(y,x),F^{n-1}(y^*,x^*)) &=& D(F(F^{n-2}(y,x),F^{n-2}(x,y)),F(F^{n-2}(y^*,x^*),F^{n-2}(x^*,y^*))\}\nonumber\\
& \leq & \frac{k}{2} \{ D(F^{n-2}(y,x),F^{n-2}(y^*,x^*))+D(F^{n-2}(x,y),F^{n-2}(x^*,y^*))\}.\nonumber
\end{eqnarray}
From the above three inequalities, we get,$$D_+((x,y),(x^*,y^*))\leq k^2[D(F^{n-2}(x,y),F^{n-2}(x^*,y^*))+D(F^{n-2}(y,x),F^{n-2}(y^*,x^*))].$$ Processing in similar way, we obtain
$$D_+((x,y),(x^*,y^*))\leq k^{n-1}[D(x,x^*)+D(y,y^*)]. $$ Since $D(x,x^*)<\infty$ and $D(y,y^*)<\infty$ so $n\rightarrow \infty$ implies that
\begin{eqnarray}
D_+((x,y),(x^*,y^*)) = 0\nonumber\\
\Rightarrow  (x,y)=(x^*,y^*)\nonumber\\
 \Rightarrow  x=x^*; y=y^*.\nonumber
\end{eqnarray}
\end{proof}
Therefore, we can conclude that if there are two comparable couple fixed points with finite distance then indeed they are same. But this does not give guarantee of uniqueness of coupled fixed point since their may be two incomparable coupled fixed point. In this regard, we present the following theorem.
\begin{theorem}
Suppose $(x,y)$ and $(x^*,y^*)$ are two incomparable coupled fixed points of $F$. Let there exist an upper or lower bound $(z_1,z_2)$ of $(x,y)$ and $(x^*,y^*)$. If $D(x,z_1),D(y,z_2), D(x^*,z_1),D(y^*,z_2) <\infty$ then $x=x^*; y=y^*$.
\begin{proof}
Since $(z_1,z_2)$ is upper or lower bound so both $(x,y)$ and $(x^*,y^*)$ are comparable to $(z_1,z_2)$. Hence, for each $n\in \mathbb{N}$, $(x,y)=(F^n(x,y),F^n(y,x))$ and $(x^*,y^*)=(F^n(x^*,y^*),F^n(y^*,x^*))$ are comparable with $(F^n(z_1,z_2),F^n(z_2,z_1)).$\\ Therefore,
\begin{eqnarray}
D_+{\left(\begin{array}{cc}
\begin{pmatrix} x\\y\end{pmatrix},& \begin{pmatrix} F^n(z_1,z_2\\F^n(z_2,z_1)\end{pmatrix}
\end{array}\right)}&=& D_+{\left(\begin{array}{cc}
\begin{pmatrix} F^n(x,y)\\F^n(y,x)\end{pmatrix},& \begin{pmatrix} F^n(z_1,z_2\\F^n(z_2,z_1)\end{pmatrix}
\end{array}\right)}\nonumber\\
& = & D(F^n(x,y),F^n(z_1,z_2))+D(F^n(y,x),F^n(z_2,z_1))\nonumber\\
& = & D(F(F^{n-1}(x,y),F^{n-1}(y,x)),F(F^{n-1}(z_1,z_2),F^{n-1}(z_2,z_1)))\nonumber\\
&  & + D(F(F^{n-1}(y,x),F^{n-1}(x,y)),F(F^{n-1}(z_2,z_1),F^{n-1}(z_1,z_2)))\nonumber\\
& \leq & k \{D(F^{n-1}(x,y),F^{n-1}(z_1,z_2))+D(F^{n-1}(y,x),F^{n-1}(z_2,z_1)) \}\nonumber\\
& \vdots{}& \nonumber\\
& \leq & k^n \{D(x,z_1)+D(y,z_2)\}.\nonumber
\end{eqnarray}
This implies that $(F^n(z_1,z_2),F^n(z_2,z_1))\rightarrow (x,y)$ as $n\rightarrow \infty$, since $D(x,z_1),D(y,z_2)<\infty.$ Similarly, we can see that
\begin{eqnarray}
D_+{\left(\begin{array}{cc}
\begin{pmatrix} x^*\\y^*\end{pmatrix},& \begin{pmatrix} F^n(z_1,z_2\\F^n(z_2,z_1)\end{pmatrix}
\end{array}\right)}&=& D_+{\left(\begin{array}{cc}
\begin{pmatrix} F^n(x^*,y^*)\\F^n(y^*,x^*)\end{pmatrix},& \begin{pmatrix} F^n(z_1,z_2\\F^n(z_2,z_1)\end{pmatrix}
\end{array}\right)}\nonumber\\
& \leq & k^n \{D(x^*,z_1)+D(y^*,z_2)\},\nonumber
\end{eqnarray}
which also shows that $(F^n(z_1,z_2),F^n(z_2,z_1))\rightarrow (x^*,y^*)$ as $n\rightarrow \infty.$
In view of Proposition \ref{propos1}, we must have $(x,y)=(x^*,y^*)$.
\end{proof}
\end{theorem}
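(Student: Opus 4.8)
The plan is to show that the \emph{single} sequence of iterates $\bigl(F^n(z_1,z_2),\,F^n(z_2,z_1)\bigr)$ converges in $(X^2,D_+)$ simultaneously to both $(x,y)$ and $(x^*,y^*)$, and then to invoke the uniqueness of $D_+$-limits (Proposition \ref{propos1}) to force the two fixed points to coincide. First I would record the structural facts that drive the argument. Because $(x,y)$ and $(x^*,y^*)$ are coupled fixed points, for every $n\in\mathbb{N}$ one has $F^n(x,y)=x$, $F^n(y,x)=y$, and likewise $F^n(x^*,y^*)=x^*$, $F^n(y^*,x^*)=y^*$, so that $(x,y)=(F^n(x,y),F^n(y,x))$. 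Since $(z_1,z_2)$ is an upper (or lower) bound of both points, each of them is comparable to $(z_1,z_2)$ under the order $\leq$ on $X^2$, and the mixed monotone property propagates this comparability to all iterates: $(F^n(x,y),F^n(y,x))$ stays comparable to $(F^n(z_1,z_2),F^n(z_2,z_1))$ for every $n$, and similarly for the starred point. This comparability is exactly what legitimizes the contraction estimate at each stage.

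Next I would run the contraction iteration. Writing $D_+$ as a sum of two $D$-distances, peeling off one application of $F$ on each summand via the hypothesis $D(F(a,b),F(c,d))\le\tfrac{k}{2}D_+((a,b),(c,d))$ on the comparable pairs, and using the built-in symmetry of $D_+$, the two factors of $\tfrac{k}{2}$ combine to a single factor $k$:
$$D_+\bigl((x,y),(F^n(z_1,z_2),F^n(z_2,z_1))\bigr)\le k\,D_+\bigl((x,y),(F^{n-1}(z_1,z_2),F^{n-1}(z_2,z_1))\bigr).$$
Iterating down to the base case gives
$$D_+\bigl((x,y),(F^n(z_1,z_2),F^n(z_2,z_1))\bigr)\le k^n\{D(x,z_1)+D(y,z_2)\}.$$
As $D(x,z_1),D(y,z_2)<\infty$ and $k\in[0,1)$, the right-hand side tends to $0$, so $(F^n(z_1,z_2),F^n(z_2,z_1))$ $D_+$-converges to $(x,y)$. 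Repeating the identical computation with $(x^*,y^*)$ in place of $(x,y)$, now using $D(x^*,z_1),D(y^*,z_2)<\infty$, shows that the very same sequence also converges to $(x^*,y^*)$. Since a $D_+$-convergent sequence has a unique limit by Proposition \ref{propos1}, I conclude $(x,y)=(x^*,y^*)$, i.e. $x=x^*$ and $y=y^*$.

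The step I expect to be the main obstacle is the preservation of comparability under iteration, since it is what authorizes each use of the contraction inequality. In the product order where $(u,v)\le(x,y)$ means $u\le x$ and $v\ge y$, one must verify that the mixed monotone property sends comparable pairs to comparable pairs under the cross-argument map $(a,b)\mapsto(F(a,b),F(b,a))$, and that a single upper (or lower) bound of both fixed points yields the required comparability in the $(z_1,z_2)$ slot at every level $n$. The finiteness hypotheses are equally essential: they are precisely what keeps the bound $k^n\{D(x,z_1)+D(y,z_2)\}$ from degenerating into a vacuous $\infty$, and they must be tracked carefully so that the limit is genuinely $0$ rather than an indeterminate quantity.
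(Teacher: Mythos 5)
Your proposal is correct and follows essentially the same route as the paper: you iterate the contraction along the comparable chain $(F^n(x,y),F^n(y,x))$ versus $(F^n(z_1,z_2),F^n(z_2,z_1))$ to get the bound $k^n\{D(x,z_1)+D(y,z_2)\}$, conclude that the single sequence $(F^n(z_1,z_2),F^n(z_2,z_1))$ converges to both fixed points, and invoke Proposition \ref{propos1} for uniqueness of limits. The steps you flag as potential obstacles (propagation of comparability under the mixed monotone property and the role of the finiteness hypotheses) are handled in the paper exactly as you describe.
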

\begin{theorem}
In addition to the hypothesis of Theorem \ref{thm1}, we can obtain equality of the component of coupled fixed point i.e. $x=y.$
\begin{proof}
We prove the theorem by considering the following three possible cases.
\textbf{Case I}: Let $x$ be comparable to $y$ with $D(x,y)<\infty$. Then,
$$D(x,y)=D(F(x,y),F(y,x))\leq kD(x,y),$$ This implies that $D(x,y)=0$ i.e. $x=y$.\\
\textbf{Case II}: Let $x$ be not comparable to $y$. Suppose every pair of element in $X$ has an upper or lower bound in $X$. Let $z$ be the upper bound (similarly, one can take it as lower bound) of $x$ and $y$  with $D(z,x)<\infty$ and $D(z,y)<\infty$. Then,  $x\leq z$  and $y\leq z$. Due to mixed monotonicity of $F$, we have
$$F(x,y)\leq F(z,y);F(y,x)\leq F(z,x)~and~F(x,y)\geq F(x,z);F(y,x)\geq F(y,z)$$ Using this with mixed monotone property of $F$, one can yield for all $n\geq 2$,
$$F^n(x,y)\leq F^n(z,y);F^n(y,x)\leq F^n(z,x)$$ and $$F^n(x,y)\geq F^n(x,z);F^n(y,x)\geq F^n(y,z).$$
Now, \begin{equation}
D(x, F^n(x,z))=D(F^n(x,y),F^n(x,z)). \label{x1}
\end{equation}
By using the Proposition \ref{propos2}, we have
\begin{equation}
D(F(x,y),F(x,z))\leq \frac{k}{2}[D(x,x)+D(y,z)]\leq \frac{k}{2}D(y,z)\nonumber
\end{equation}
and
\begin{eqnarray}
D(F^2(x,y),F^2(x,z))&=&D(F(F(x,y),F(y,x)),F(F(x,z),F(z,x)))\nonumber\\
&\leq &\frac{k}{2}[D(F(x,y),F(x,z))+D(F(y,x),F(z,x))]\nonumber\\
&\leq &\frac{k^2}{2}[D(x,x)+D(y,z))]\nonumber\\
&\leq &\frac{k^2}{2}D(y,z).\nonumber
\end{eqnarray} 
Similarly, for each $n>2$, 
\begin{equation}
D(F^n(x,y),F^n(x,z))\leq \frac{k^n}{2}D(y,z).\label{x2}
\end{equation}
In view of Equations \ref{x1} and \ref{x2}, we obtain
\begin{equation}
D(x, F^n(x,z))=D(F^n(x,y),F^n(x,z))\leq \frac{k^n}{2}D(y,z).\label{x3}
\end{equation}
Again, \begin{equation}
D(F^n(z,x),y)= D(F^n(z,x),F^n(y,x)). \nonumber
\end{equation}
\begin{equation}
D(F(z,x),F(y,x))\leq \frac{k}{2}[D(z,y)+D(x,x)]\leq \frac{k}{2}D(y,z).\nonumber
\end{equation}
and \begin{eqnarray}
D(F^2(z,x),F^2(y,x))&=&D(F(F(z,x),F(x,z)),F(F(y,x),F(x,y)))\nonumber\\
&\leq &\frac{k}{2}[D(F(z,x),F(y,x))+D(F(x,z),F(x,y))]\nonumber\\
&\leq &\frac{k^2}{2}[D(z,y)+D(x,x))]\nonumber\\
&\leq &\frac{k^2}{2}D(y,z).\nonumber
\end{eqnarray} 
So, for each $n\geq 2$,
 \begin{equation}
D(F^n(z,x),F^n(y,x))\leq \frac{k^n}{2}D(y,z).\nonumber
\end{equation}
This implies that \begin{equation}
D(F^n(z,x),y)= D(F^n(z,x),F^n(y,x))\leq \frac{k^n}{2}D(y,z).\label{x4}
\end{equation}
Furthermore, $$D(F(x,z),F(z,x))\leq \frac{k}{2}[D(x,z)+D(z,x)]\leq k D(z,x).$$
\begin{eqnarray}
D(F^2(x,z),F^2(z,x))&= & D(F(F(x,z),F(z,x)), F(F(z,x),F(x,z)))\nonumber\\
&\leq &\frac{k}{2}[D(F(x,z),F(z,x))+D(F(z,x),F(x,z))]\nonumber\\
& \leq & k^2 D(x,z).\nonumber
\end{eqnarray}
Processing in this way, we get, 
\begin{equation}
D(F^n(x,z),F^n(z,x))\leq k^n D(z,x)\label{x5}
\end{equation} 
As $D(z,x)<\infty$ and $D(z,y)<\infty$, so from Equations \ref{x3}, \ref{x4} and \ref{x5}, we have
 \[D(x, F^n(x,z))=0, D(F^n(z,x),y)=0 \mbox{ and } D(F^n(x,z),F^n(z,x))=0\] whenever $n\rightarrow \infty$. These imply $x=y$.\\
 
\textbf{Case III}: Let $x_0$ be comparable to $y_0$ with $D(x_0,y_0)<\infty$. Then
\begin{eqnarray}
D(x,y)& \leq & c\limsup D(F^n(x_0,y_0),F^n(y_0,x_0))\nonumber\\
& \leq & c\limsup D(F(F^{n-1}(x_0,y_0),F^{n-1}(y_0,x_0)),F(F^{n-1}(y_0,x_0),F^{n-1}(x_0,y_0)))\nonumber\\
& \leq & kc \limsup {D(F^{n-1}(x_0,y_0),F^{n-1}(y_0,x_0))}\nonumber\\
 & \vdots & \nonumber\\
& \leq & k^{n-1} D(x_0,y_0)\rightarrow 0 ~as~ n\rightarrow \infty.\nonumber
\end{eqnarray}
Therefore, we get $x=y$.
\end{proof}
\end{theorem}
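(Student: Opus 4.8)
The plan is to lean on a single identity: since $(x,y)$ is a coupled fixed point, $F^n(x,y)=x$ and $F^n(y,x)=y$ for every $n$, so in particular $D(x,y)=D(F(x,y),F(y,x))$. The contraction of Theorem~\ref{thm1} turns this into a self-improving estimate, but only when its arguments are comparable, i.e. when $(x,y)$ and $(y,x)$ can be ordered. Everything is therefore organised around how $x$ and $y$ compare, and I would split into the comparable case, the case of comparable seeds $x_0,y_0$, and the genuinely incomparable case.

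The comparable case is immediate and needs no limiting argument. If $x$ and $y$ are comparable with $D(x,y)<\infty$, then $(x,y)$ and $(y,x)$ are ordered in the product order, so the contraction applies directly to give $D(x,y)=D(F(x,y),F(y,x))\le \frac{k}{2}\bigl[D(x,y)+D(y,x)\bigr]=kD(x,y)$; with $k\in[0,1)$ and $D(x,y)$ finite this forces $D(x,y)=0$, hence $x=y$ by (D1).

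The remaining cases are driven by geometric decay along iterates. When $x_0$ and $y_0$ are comparable, mixed monotonicity propagates the order, giving $x_n=F^n(x_0,y_0)\ge F^n(y_0,x_0)=y_n$ for every $n$, and iterating the contraction yields $D(x_n,y_n)\le k^n D(x_0,y_0)$. When $x$ and $y$ are incomparable, I would instead invoke the hypothesised common bound $z$ (say $x\le z$, $y\le z$, with the listed distances finite) and track the two sequences $F^n(x,z)$ and $F^n(z,x)$: mixed monotonicity keeps the arguments comparable at each step, and, crucially, whenever a slot repeats, the diagonal term $D(x,x)$ that the contraction throws up vanishes by Proposition~\ref{propos2}, since $x$ is the limit of a convergent sequence. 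Careful bookkeeping of the factors then produces $D(x,F^n(x,z))\le \tfrac{k^n}{2}D(y,z)$, $D(F^n(z,x),y)\le \tfrac{k^n}{2}D(y,z)$ and $D(F^n(x,z),F^n(z,x))\le k^n D(x,z)$, all tending to $0$.

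The main obstacle is the passage from these decay estimates to the equality $x=y$, because a JS-metric has no triangle inequality and $D$ is not assumed continuous, so one cannot simply add distances. I would route everything through axiom (D3) and the uniqueness of JS-limits (as established in Proposition~\ref{propos1}). In the incomparable case the three estimates say that $F^n(x,z)\to x$, that $F^n(z,x)\to y$, and that the two sequences become arbitrarily close, so they must share a single limit and uniqueness collapses $x$ and $y$; in the comparable-seeds case one feeds $D(x_n,y_n)\to 0$ with $x_n\to x$ and $y_n\to y$ into (D3) to reach $D(x,y)=0$. Converting ``the sequences converge and become close'' into ``their limits coincide'' using only (D3) is the delicate point, and it is exactly here that the finiteness hypotheses $D(x,z),D(y,z)<\infty$ (respectively $D(x_0,y_0)<\infty$) are indispensable, preventing the decay factor $k^n\to 0$ from meeting an infinite distance.
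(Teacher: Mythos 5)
Your proposal is correct and follows the paper's own proof essentially step for step: the same three-case decomposition (comparable components, incomparable components resolved through a common bound $z$, and comparable seeds $x_0,y_0$), the same contraction estimates $D(x,F^n(x,z))\le\tfrac{k^n}{2}D(y,z)$, $D(F^n(z,x),y)\le\tfrac{k^n}{2}D(y,z)$ and $D(F^n(x,z),F^n(z,x))\le k^nD(z,x)$, and the same use of Proposition \ref{propos2} to discard the diagonal terms $D(x,x)$. The final passage you flag as delicate --- identifying the limits of two asymptotically close sequences through (D3) and Proposition \ref{propos1} --- is exactly the step the paper leaves implicit (``These imply $x=y$''), so your treatment is at the same level of detail as the source.
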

As every standard metric space is JS-metric space so we can obtain the main result of Bhaskar and Lakshmikantham \cite{bhas} as a corollary of our obtained result (Theorem \ref{thm1}).
\begin{corollary}
Let $(X,\leq)$ be a partially ordered set and suppose there is a standard metric $d$ on $X$ such that $(X,d)$ is complete. Let $F:X^2\rightarrow X$ be a mapping with mixed monotone property on $X$.
Assume that there exists $k\in [0,1)$ with
$$d(F(x,y),F(u,v))\leq \frac{k}{2}[d(x,u)+d(y,v)]$$ for each $x\geq u$ and $y\leq v$. If there exist $x_0,y_0\in X$ such that
\begin{enumerate}
\item[(A)]$x_0\leq F(x_0,y_0);y_0\geq F(y_0,x_0)$;
\item[(B)] $\delta_F(D,(x_0,y_0))<\infty$ and $\delta_F(D,(y_0,x_0))<\infty;$
\end{enumerate}
then there  exist $x,y\in X$ such that $x=F(x,y);y=F(y,x)$.
\end{corollary}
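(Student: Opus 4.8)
The plan is to derive the Corollary directly from Theorem~\ref{thm1} by checking that a standard metric $d$ is a particular instance of a JS-metric, so that one may simply set $D=d$ and quote the theorem. First I would verify that $(X,d)$ satisfies the three axioms of Definition~\ref{dfn1}. Axioms (D1) and (D2) are immediate, since identity of indiscernibles and symmetry are part of the very definition of a metric. For (D3) I would take $c=1$: if $(x_n)\in C(d,X,x)$, that is $d(x_n,x)\to 0$, then for every $y\in X$ the triangle inequality gives $d(x,y)\le d(x,x_n)+d(x_n,y)$ for all $n$, and taking $\limsup_{n\to\infty}$ together with $d(x,x_n)\to 0$ yields $d(x,y)\le\limsup_{n\to\infty}d(x_n,y)$. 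Hence $d$ satisfies (D3) with $c=1$, and $(X,d)$ is a JS-metric space.

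Next I would observe that, because $d(x,x)=0$ for a standard metric, the JS-notions of convergence, Cauchy sequence and completeness coincide with the usual metric ones; in particular completeness of $(X,d)$ as a metric space is exactly its completeness as a JS-metric space. With $D=d$ the induced distance $D_+((x,y),(u,v))=d(x,u)+d(y,v)$ is precisely the quantity appearing in the contraction hypothesis of the Corollary, so that assumption reads $d(F(x,y),F(u,v))\le\frac{k}{2}D_+((x,y),(u,v))$ for $x\ge u$, $y\le v$. The mixed monotone property and the two standing assumptions (A) and (B) are then word-for-word the hypotheses of Theorem~\ref{thm1}.

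Having matched every hypothesis, I would invoke Theorem~\ref{thm1} to obtain $x,y\in X$ with $x=F(x,y)$ and $y=F(y,x)$, which is exactly the assertion of the Corollary. There is essentially no obstacle here: the whole content is the specialization from JS-metrics to standard metrics, and the only step requiring even a line of argument is the verification of (D3) via the triangle inequality with $c=1$. I would remark in passing that in the standard-metric setting one can in fact establish the Cauchy property of the iterates directly, without appealing to the finiteness hypothesis (B); but since (B) is already assumed in the statement, the cleanest route is to quote Theorem~\ref{thm1} verbatim.
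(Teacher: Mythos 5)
Your proposal is correct and follows essentially the same route as the paper, which presents the corollary as an immediate specialization of Theorem~\ref{thm1} via the observation (due to Jleli and Samet) that every standard metric space is a JS-metric space. Your explicit verification of axiom (D3) with $c=1$ via the triangle inequality is a welcome detail that the paper omits, but the underlying argument is identical.
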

\begin{remark}
The authors of \cite{bhas} considered two alternative hypothesises to establish the existence of couple fixed points. These are: either the function $F$ is continuous or if $(x_n)$ and $(y_n)$ are non-decreasing and non-increasing sequences respectively with $x_n\rightarrow x$ and $y_n\rightarrow y$ then $x_n\leq x$; $y_n\geq y$ for all $n\in \mathbb{N}$. But we prove the existence of couple fixed points without assuming any of the above mentioned hypothesises.
\end{remark}
If we replace the distance function $`D_+$' on $X^2$ by $`D_m$' then we can also prove the existence of  coupled fixed point. In this direction we present the following theorem.
\begin{theorem}
Let $F:X^2:\rightarrow X$ be a mapping with mixed monotone property on $X$. Assume that there exists $k\in [0,1)$ such that $$D(F(x,y),F(u,v))\leq kD_m((x,y),(u,v))$$ for $x\geq u;y\leq v$.   If there exist $x_0,y_0\in X$ such that
\begin{enumerate}
\item[(A)] $x_0\leq F(x_0,y_0);y_0\geq F(y_0,x_0)$;
\item[(B)] $\delta_F(D,(x_0,y_0))<\infty$ and $\delta_F(D,(y_0,x_0))<\infty;$
\end{enumerate}
then there  exist $x,y\in X$ such that $x=F(x,y);y=F(y,x)$.
\end{theorem}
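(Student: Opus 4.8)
The plan is to mirror the argument of Theorem \ref{thm1}, the only genuine change being the Cauchy estimate, which must now be driven by the maximum rather than the sum. First I would build the two Picard-type sequences $x_n = F^n(x_0,y_0)$ and $y_n = F^n(y_0,x_0)$ and record, exactly as in Theorem \ref{thm1}, that the mixed monotone property together with hypothesis (A) makes $(x_n)$ monotone increasing and $(y_n)$ monotone decreasing; in particular, for $i\leq j$ one has $x_i\leq x_j$ and $y_j\leq y_i$, so the contraction hypothesis is applicable to the relevant comparable pairs.

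The heart of the proof is a joint decay estimate. Fix $i\leq j$ and set $M_n=\max\{D(x_{n+j},x_{n+i}),\,D(y_{n+j},y_{n+i})\}$. Applying the contraction to $D(x_{1+j},x_{1+i})=D(F(x_j,y_j),F(x_i,y_i))$ yields $D(x_{1+j},x_{1+i})\leq k\max\{D(x_j,x_i),D(y_j,y_i)\}$, and the same bound holds for $D(y_{1+j},y_{1+i})$ after reordering the arguments of $F$ and invoking the symmetry axiom (D2). Hence $M_{n+1}\leq kM_n$, and by induction $M_n\leq k^n M_0$. Since $M_0\leq M:=\max\{\delta_F(D,(x_0,y_0)),\delta_F(D,(y_0,x_0))\}$, which is finite by (B), I obtain
$$D(x_{n+j},x_{n+i})\leq k^n M \quad\text{and}\quad D(y_{n+j},y_{n+i})\leq k^n M.$$
Taking $\limsup_{n\to\infty}$ forces both quantities to $0$ because $k\in[0,1)$ and $M<\infty$, so $(x_n)$ and $(y_n)$ are Cauchy. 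Completeness of $(X^2,D_m)$, which follows from completeness of $(X,D)$ as noted earlier, then yields $x_n\to x$ and $y_n\to y$.

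Finally I would verify that $(x,y)$ is a coupled fixed point exactly as in Theorem \ref{thm1}: using axiom (D3) and then the contraction,
$$D(F(x,y),x)\leq c\limsup D(F(x,y),F(x_{n-1},y_{n-1}))\leq kc\limsup \max\{D(x,x_{n-1}),D(y,y_{n-1})\}=0,$$
so $F(x,y)=x$, and symmetrically $F(y,x)=y$.

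I expect the only real subtlety to be the recursion step for $(y_n)$: to apply the contraction, whose premise is $x\geq u,\ y\leq v$, one must write $D(y_{1+j},y_{1+i})=D(F(y_i,x_i),F(y_j,x_j))$ with the inequalities $y_i\geq y_j$ and $x_i\leq x_j$ placed in the correct slots, and then use (D2) to restore the indices. Getting this bookkeeping right is precisely what makes both halves of $M_{n+1}\leq kM_n$ emerge with the same constant $k$. Everything else is routine, and, as in Theorem \ref{thm1}, neither continuity of $F$ nor any additional order-closure hypothesis on $X$ is required.
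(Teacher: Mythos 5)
Your proposal is correct and follows exactly the route the paper intends: the paper gives no details for this theorem, stating only that the proof is ``almost similar'' to that of Theorem \ref{thm1}, and your argument is precisely that adaptation, with the sum-based estimate replaced by the joint recursion $M_{n+1}\leq kM_n$ on the maximum. The bookkeeping point you flag for $(y_n)$ (reordering the arguments and using (D2) so that the contraction hypothesis applies) is handled the same way in the paper's proof of Theorem \ref{thm1}, so nothing further is needed.
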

\begin{proof} Proof is almost similar to the proof of Theorem \ref{thm1}. Hence, we skip this.
\end{proof}
We now construct an example to support our main result.
\begin{example} Let us consider $X=\mathbb{R}\cup\{\infty, -\infty\}$ and we define the distance function $D$ on $X$ as $D(x,y)=|x|+|y|$ for all $x,y\in X$. At first, we prove that $(X,D)$ is JS-metric space. In order to prove this we check the axioms of JS-metric spaces.
\begin{enumerate}
\item $D(x,y)=0\Rightarrow |x|+|y|=0\Rightarrow |x|=|y|=0$ i.e. $x=y=0.$
\item Clearly, $D(x,y)=D(y,x)$.
\item Let $(x_n)$ be a sequence converging to some $x$ in $X$. Then for any $y\in X$ we have $D(x,y)=|x|+|y|.$ Again, $D(x_n,y)=|x_n|+|y|$ and $\limsup D(x_n,y)=\limsup(|x_n|+|y|)=|x|+|y|$. So, we can always find some $c\geq 1$ such that $D(x,y)\leq c\limsup D(x_n,y)$. 
\end{enumerate}
All the axioms are satisfied. Hence, $(X,D)$ is a  JS-metric space. Now, we consider the metric space $(X^2, D_+)$, where, $$D_+((x,y),(u,v))=D(x,u)+D(y,v).$$ Let us define a function $F:X^2\rightarrow X$ by$$F(x,y)=\frac{x-y}{3} ~\forall x,y\in X.$$ Then,
\begin{enumerate}
\item[(i)] Let $x_1\leq x_2$. Then for all $y\in X$, we have $x_1-y\leq x_2-y$ which implies that $F(x_1,y)\leq F(x_2,y)$ i.e. $F$ is monotonic non-decreasing sequence in its 1st component. Again, for all $x\in X$, whenever $y_1\leq y_2$ we get $x-y_1\geq x-y_2$ which shows that $F(x,y_1)\geq F(x,y_2)$. So $F$ is monotonic non-increasing function in its 2nd component. Thus $F$ has mixed monotone property.
\item[(ii)]
Let $(x,y),(u,v)\in X^2$. Then.
\begin{eqnarray}
D(F(x,y),F(u,v))&= &\frac{|x-y|}{3}+\frac{|u-v|}{3}\nonumber\\
&\leq & \frac{1}{3}|x|+ \frac{1}{3}|y|+  \frac{1}{3}|u|+  \frac{1}{3}|v|\nonumber\\
&\leq &  \frac{1}{3}(|x|+|u|)+ \frac{1}{3}(|y|+|v|)\nonumber\\
& \leq &  \frac{2}{3}\frac{D_+((x,y),(u,v))}{2}\nonumber.
\end{eqnarray}
This shows that $F$ satisfies the contraction condition.
\item[(iii)]
Let us set $x_0=-3$ and $y_0=2$. Then, $$x_1=F(x_0,y_0)= F(-3,2)=\frac{-5}{3}>x_0=-3$$ and 
$$y_1=F(y_0,x_0)= F(2,-3)=\frac{5}{3}<y_0=2.$$ 
Again, it is easy to show that for all $i,j\in \mathbb{N}$, $\delta_F(D,(x_0,y_0))<\infty$ and $\delta_F(D,(y_0,x_0))<\infty$.
\end{enumerate}
 Thus all the conditions of the Theorem \ref{thm1} are satisfied. Therefore $F$ has a coupled fixed point. Here, $(0,0)$ is a coupled fixed point of $F$. Notice that this is not unique since  $(\infty, -\infty)$ is also a coupled fixed point of $F$. 
\end{example}
\subsection{Extension of Berinde's results} In this section we extend the results of Berinde \cite{berd} which generalize the results of Bhaskar and Lakshmikantham \cite{bhas}. The Contraction \ref{c2} in the setting of $(X^2,D_+)$ is presented by 
$$D(F(x,y),F(u,v))+D(F(y,x),F(v,u))\leq k[D(x,u)+D(y,v)]$$ for all $x\geq u;y\leq v$ and $k\in [0,1)$. We define an operator $T_F:X^2\rightarrow X^2$ by 
$$T_F(x,y)=(F(x,y),F(y,x)).$$ Then we can write the above contraction as follows:
\begin{equation}D_+(T_F(X), T_F(U))\leq k D_+(X,U) \label{equ2.1} \end{equation} where $X=(x,y); U=(u,v)$ and $k\in [0,1)$. Therefore, coupled fixed point theorem for $F$ reduces to usual Banach's fixed point theorem for the operator $T_F$ because one can easily check that $F$ has coupled fixed point iff $T_F$ has a fixed point. So, for the existence of coupled fixed point of $F$ it is sufficient to prove that $T_F$ has fixed point in $X^2$.

By the notation $\delta(D_+, T_F,z_0)$, we define $$\delta(D_+, T_F,z_0)=\sup\{ D_+(T_F^i(z_0),T_F^j(z_0));i,j\in \mathbb{N}\}.$$
The following results extend the results of Berinde \cite{berd}.
 \begin{theorem}\label{thm2}
 Let $F:X^2:\rightarrow X$ be a mapping with mixed monotone property on partially ordered complete $D_+$-JS-metric space $(X^2,D_+)$. Suppose for all $x\geq u; y\leq v$, $T_F$ satisfies the Contraction \ref{equ2.1}. Then if there exists $z_0=(x_0,y_0)\in X^2$ such that
 \begin{enumerate}
 \item $x_0\leq F(x_0,y_0)$ and $y_0\geq F(y_0,x_0)$ or 
 \item $x_0\geq F(x_0,y_0)$ and $y_0\leq F(y_0,x_0)$,
 \item $\delta(D_+, T_F,z_0)<\infty$
 \end{enumerate}
 then there exist $\tilde{x},\tilde{y}\in X$ such that $\tilde{x}=F(\tilde{x},\tilde{y})$; $\tilde{y}=F(\tilde{y},\tilde{x})$.
\end{theorem}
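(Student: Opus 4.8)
The plan is to exploit the reduction recorded just before the statement: a pair $(\tilde{x},\tilde{y})$ is a coupled fixed point of $F$ precisely when $z^{*}=(\tilde{x},\tilde{y})$ is a fixed point of $T_{F}$ on the complete $D_{+}$-JS-metric space $(X^{2},D_{+})$. So it suffices to produce a fixed point of $T_{F}$, and the argument will run parallel to Theorem \ref{thm1}, but carried out one level up for the single operator $T_{F}$ instead of componentwise for $F$. A useful preliminary observation is that the mixed monotone property of $F$ makes $T_{F}$ \emph{order-preserving} for the order on $X^{2}$: if $(x,y)\le(u,v)$, i.e. $x\le u$ and $y\ge v$, then $F(x,y)\le F(u,v)$ and $F(y,x)\ge F(v,u)$, which is exactly $T_{F}(x,y)\le T_{F}(u,v)$.

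First I would fix $z_{0}=(x_{0},y_{0})$ and set $z_{n}=T_{F}^{n}(z_{0})$. Under hypothesis (1), the condition $x_{0}\le F(x_{0},y_{0})$, $y_{0}\ge F(y_{0},x_{0})$ says exactly that $z_{0}\le z_{1}$; since $T_{F}$ is order-preserving, applying it repeatedly gives $z_{0}\le z_{1}\le z_{2}\le\cdots$, so $(z_{n})$ is a chain and any two iterates are comparable (hypothesis (2) gives the reversed chain and is handled symmetrically). This comparability is what licenses the use of Contraction \eqref{equ2.1} along the orbit: for all $n,p$ we get $D_{+}(z_{n+1},z_{n+1+p})=D_{+}(T_{F}(z_{n}),T_{F}(z_{n+p}))\le k\,D_{+}(z_{n},z_{n+p})$, and iterating yields $D_{+}(z_{n},z_{n+p})\le k^{n}\,\delta(D_{+},T_{F},z_{0})$. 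Hypothesis (3) makes $\delta(D_{+},T_{F},z_{0})<\infty$ and $k\in[0,1)$, so taking $\limsup_{n\to\infty}$ forces $\limsup_{n}D_{+}(z_{n},z_{n+p})=0$ for every $p$; that is, $(z_{n})$ is $D_{+}$-Cauchy. By completeness of $(X^{2},D_{+})$ there is $z^{*}=(\tilde{x},\tilde{y})$ with $z_{n}\to z^{*}$.

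Finally I would verify $T_{F}(z^{*})=z^{*}$ by invoking axiom (D3) for $D_{+}$: since $(z_{n})\in C(D_{+},X^{2},z^{*})$,
$$D_{+}(T_{F}(z^{*}),z^{*})\le c\,\limsup_{n}D_{+}(T_{F}(z^{*}),z_{n+1})=c\,\limsup_{n}D_{+}(T_{F}(z^{*}),T_{F}(z_{n}))\le c\,\limsup_{n}k\,D_{+}(z^{*},z_{n})=0,$$
whence $z^{*}=T_{F}(z^{*})$, i.e. $\tilde{x}=F(\tilde{x},\tilde{y})$ and $\tilde{y}=F(\tilde{y},\tilde{x})$. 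The step I expect to be the main obstacle is exactly this last one: applying \eqref{equ2.1} to $D_{+}(T_{F}(z^{*}),T_{F}(z_{n}))$ presupposes $z^{*}$ and $z_{n}$ are comparable, which does not follow automatically in a JS-metric space, since there is no order-limit property available (indeed dropping that regularity assumption is what the authors advertise in the Remark). I would resolve it by mirroring the closing computation of Theorem \ref{thm1} — unfolding $z_{n+1}=T_{F}(z_{n})$ and pushing the contraction back onto the comparable orbit pair inside the $\limsup$ — while keeping the finiteness conditions from (3) in force so that every $D_{+}$-term that appears is finite and the geometric factor $k^{n}$ genuinely drives the bound to zero.
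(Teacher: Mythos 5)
Your proposal is correct and follows essentially the same route as the paper: build the Picard orbit $z_{n}=T_{F}^{n}(z_{0})$, use hypothesis (3) and the contraction to get $D_{+}(T_{F}^{n}(z_{0}),T_{F}^{n+m}(z_{0}))\le k^{n}\delta(D_{+},T_{F},z_{0})\to 0$, pass to a limit $\tilde{z}$ by completeness, and then apply the contraction once more to identify $\tilde{z}$ with $T_{F}(\tilde{z})$ (the paper phrases this last step as $z_{n}\to T_{F}(\tilde{z})$ plus uniqueness of limits via Proposition \ref{propos1}, which is the same (D3) computation you write directly). The comparability worry you raise at the end is real but is equally present in the paper's own proof, which applies Contraction \ref{equ2.1} to $D_{+}(T_{F}(z_{n}),T_{F}(\tilde{z}))$ without establishing that $z_{n}$ and $\tilde{z}$ are comparable; you are not missing anything the authors supply.
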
  
\begin{proof}
By the hypothesis of the theorem, let us assume, there exists $z_0=(x_0,y_0)\in X^2$ with $x_0\leq F(x_0,y_0)$ and $y_0\geq F(y_0,x_0)$. We denote  $x_1=F(x_0,y_0)$ and $y_1=F(y_0,x_0)$. Then $z_1=(x_1,y_1)=(F(x_0,y_0),F(y_0,x_0))=T_F(x_0,y_0)=T_F(z_0)$.\\ Again, $z_2=(x_2,y_2)=(F(x_1,y_1),F(y_1,x_1))=T_F(x_1,y_1)=T_F(z_1)=T_F^2(z_0)$. With this notation, we obtain the Picard sequence $(z_n)$ with initial approximation $z_0$, defined by $$z_{n+1}=T_F(z_n)=(F(x_n,y_n),F(y_n,x_n))=(x_{n+1},y_{n+1}) $$ for all $n\geq 0$ and $z_n=(x_n,y_n)$.\\
Due to mixed monotone property of $F$, it is easy to show that for all $n>0$, $z_{n}\leq z_{n+1}$. Next, we prove that $(z_n)$ is a Cauchy sequence. For all $n\geq 0$ and $i\leq j$, we get
\begin{eqnarray}
D_+(T_F^{n+i}(z_0),T_F^{n+j}(z_0))&\leq & D_+(T_F^{n-1+i}(z_0),T_F^{n-1+j}(z_0))\nonumber\\
\Rightarrow \delta(D_+,T_F,T_F^n(z_0))&\leq & k \delta(D_+,T_F,T_F^{n-1}(z_0))\nonumber\\
&\leq &k^2 \delta(D_+,T_F,T_F^{n-2}(z_0))\nonumber\\
&&\vdots\nonumber\\
& \leq &k^n \delta(D_+,T_F,z_0)\nonumber\\
& \rightarrow & 0 ~as~n \rightarrow \infty.\nonumber
\end{eqnarray}
Therefore, for all $n,m\in \mathbb{N}$, we obtain $$D_+(T_F^{n}(z_0),T_F^{n+m}(z_0))\leq \delta(D_+,T_F,T_F^n(z_0)) \leq k^n \delta(D_+,T_F,z_0)=0,$$ which implies that $(z_n)$ is a Cauchy sequence.
Let $z_n$ converges to $\tilde{z}=(\tilde{x},\tilde{y}) \in X$. Again,\begin{eqnarray}
& &D_+(z_{n+1},T_F(\tilde{z}))=D_+(T_F(z_{n}),T_F(\tilde{z}))\leq k D_+(z_{n},\tilde{z})\nonumber\\
& &\Rightarrow D_+(z_{n+1},T_F(\tilde{z})= 0\nonumber\\
& &\Rightarrow z_n\rightarrow T_F(\tilde{z}) ~as~ n\rightarrow \infty.
\end{eqnarray}
Since limit of a convergent sequence in this structure is unique, so we must have $\tilde{z}=T_F(\tilde{z})$ i.e. $\tilde{z}$ is a fixed point of $T_F$. By previous assertion we can conclude that $\tilde{z}=(\tilde{x},\tilde{y})$ is a coupled fixed point of $F$, that is, $\tilde{x}=F(\tilde{x},\tilde{y})$ and  $\tilde{y}=F(\tilde{y},\tilde{x})$.
\end{proof}
Next, we present the additional conditions for uniqueness of coupled fixed point of $F$.
\begin{theorem}
Let $\tilde{z}=(\tilde{x},\tilde{y})$ and $w=(u,v)$ be two comparable coupled fixed points of $F$ with $D_+(w,\tilde{z})<\infty$. Then $w=\tilde{z}$. 
 \end{theorem}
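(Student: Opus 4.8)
The plan is to reduce everything to a single application of the contraction (\ref{equ2.1}) together with the finiteness hypothesis. First I would recall that, as observed just before Theorem \ref{thm2}, a coupled fixed point of $F$ corresponds exactly to a fixed point of the associated operator $T_F$; hence both points are $T_F$-fixed, that is $T_F(\tilde{z}) = \tilde{z}$ and $T_F(w) = w$.

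Next, since $\tilde{z}$ and $w$ are assumed comparable, one of them dominates the other in the partial order on $X^2$; without loss of generality say $w \leq \tilde{z}$, i.e. $u \leq \tilde{x}$ and $v \geq \tilde{y}$. This is precisely the configuration $x \geq u;\, y \leq v$ under which the contraction (\ref{equ2.1}) is assumed to hold, so I may apply it with $X = \tilde{z}$ and $U = w$. Using that both points are $T_F$-fixed, this yields
$$D_+(\tilde{z}, w) = D_+(T_F(\tilde{z}), T_F(w)) \leq k\, D_+(\tilde{z}, w).$$
If instead $\tilde{z} \leq w$, the symmetry of $D_+$ (property (ii) established above) gives the same inequality after interchanging the roles of the two points, so the direction of the comparison is immaterial.

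Finally I would invoke the hypothesis $D_+(w, \tilde{z}) < \infty$, which guarantees that $D_+(\tilde{z}, w)$ is a finite nonnegative real number rather than $+\infty$; the displayed inequality then rearranges to $(1-k)\, D_+(\tilde{z}, w) \leq 0$, and since $k \in [0,1)$ we conclude $D_+(\tilde{z}, w) = 0$. By axiom (D1) for $D_+$ (property (i) verifying that $D_+((x,y),(u,v)) = 0$ forces $(x,y) = (u,v)$), this gives $\tilde{z} = w$, as required.

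There is no genuinely hard step here; the only point demanding care is the finiteness assumption $D_+(w,\tilde{z}) < \infty$, without which the inequality $D_+(\tilde{z},w) \leq k\, D_+(\tilde{z},w)$ would be vacuous, since $+\infty \leq k\cdot(+\infty)$ holds trivially and no contradiction could be extracted. The comparability hypothesis is exactly what licenses the single use of the contraction (\ref{equ2.1}), and the symmetry of $D_+$ removes any dependence on which of the two orderings actually holds.
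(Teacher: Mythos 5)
Your proposal is correct and follows essentially the same route as the paper: a single application of the contraction (\ref{equ2.1}) to the two $T_F$-fixed points, followed by the observation that $D_+(w,\tilde{z})\leq k\,D_+(w,\tilde{z})$ with $k\in[0,1)$ and $D_+(w,\tilde{z})<\infty$ forces $D_+(w,\tilde{z})=0$, hence $w=\tilde{z}$. Your added remarks on the direction of comparability and on why finiteness is indispensable are points the paper leaves implicit, but they do not change the argument.
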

\begin{proof}We have
\begin{eqnarray}
&&D_+(w,\tilde{z})=D_+(T_F(w),T_F(\tilde{z}))\leq k D_+(w,\tilde{z})\nonumber\\
&&\Rightarrow D_+(w,\tilde{z})=0\nonumber\\
&& \Rightarrow w=\tilde{z}\nonumber\\
&&\Rightarrow (\tilde{x},\tilde{y})=(u,v).\nonumber
\end{eqnarray} 
Hence the proof follows.
\end{proof} 
 \begin{theorem}
 Let $w$ and $\tilde{z}$ be two incomparable coupled fixed points of $F$. Suppose there exists an upper bound or lower bound $z^*=(x^*,y^*)\in X^2$ of $w$ and $\tilde{z}$ with $D_+(w,z^*)<\infty$ and $D_+(\tilde{z},z^*)<\infty$. Then $w=\tilde{z}$. \end{theorem}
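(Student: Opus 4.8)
The plan is to mimic the uniqueness argument used for the $D_+$-version established above, but now to exploit the single operator $T_F$ together with the Banach-type contraction \ref{equ2.1}, which renders the iteration completely transparent. The central idea is that, since $z^*$ is an upper or lower bound of $w$ and $\tilde{z}$, it is comparable to each of them; if I can show that the single Picard orbit $(T_F^n(z^*))$ converges simultaneously to $w$ and to $\tilde{z}$, then the uniqueness of limits (Proposition \ref{propos1}) forces $w=\tilde{z}$.

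First I would record the auxiliary fact that $T_F$ is order-preserving on $(X^2,\leq)$: if $(x_1,y_1)\leq(x_2,y_2)$, i.e.\ $x_1\leq x_2$ and $y_1\geq y_2$, then the mixed monotone property of $F$ yields $F(x_1,y_1)\leq F(x_2,y_2)$ and $F(y_1,x_1)\geq F(y_2,x_2)$, so that $T_F(x_1,y_1)\leq T_F(x_2,y_2)$. Consequently, comparability is preserved under iteration: since $w$ (respectively $\tilde{z}$) is comparable to $z^*$, the pairs $T_F^n(w)$ and $T_F^n(z^*)$ (respectively $T_F^n(\tilde{z})$ and $T_F^n(z^*)$) remain comparable for every $n$. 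Because $w$ and $\tilde{z}$ are coupled fixed points, $T_F^n(w)=w$ and $T_F^n(\tilde{z})=\tilde{z}$ for all $n$, so the contraction \ref{equ2.1} may be applied legitimately at every level of the orbit.

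Next I would simply iterate \ref{equ2.1} to obtain
\begin{equation}
D_+(w,T_F^n(z^*))=D_+(T_F^n(w),T_F^n(z^*))\leq k^n\,D_+(w,z^*),\nonumber
\end{equation}
and since $D_+(w,z^*)<\infty$ and $k\in[0,1)$, the right-hand side tends to $0$, whence $T_F^n(z^*)\to w$. An identical computation using $D_+(\tilde{z},z^*)<\infty$ gives $T_F^n(z^*)\to\tilde{z}$. Thus the one sequence $(T_F^n(z^*))$ $D_+$-converges both to $w$ and to $\tilde{z}$, and Proposition \ref{propos1} delivers $w=\tilde{z}$.

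The only delicate point is the very first step: verifying that comparability of the iterates is genuinely maintained, so that \ref{equ2.1}, which is assumed only for comparable arguments, can be invoked at each $n$; this rests entirely on the order-preservation of $T_F$ inherited from the mixed monotonicity of $F$. Once that is secured, the remainder is a routine geometric decay combined with the uniqueness of limits, and the finiteness hypotheses $D_+(w,z^*)<\infty$, $D_+(\tilde{z},z^*)<\infty$ are exactly what is needed to pass from $k^n\,D_+(\cdot,z^*)\to 0$ to the convergence of the orbit.
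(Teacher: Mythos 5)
Your proof is correct and follows essentially the same route as the paper: iterate the contraction \ref{equ2.1} along the orbit $T_F^n(z^*)$, use $T_F^n(w)=w$ and $T_F^n(\tilde{z})=\tilde{z}$ to show this single sequence converges to both fixed points, and invoke Proposition \ref{propos1}. Your explicit verification that mixed monotonicity makes $T_F$ order-preserving (so comparability survives iteration) is a welcome detail the paper only asserts with ``clearly,'' and your direct use of the identity $D_+(w,T_F^n(z^*))=D_+(T_F^n(w),T_F^n(z^*))$ is cleaner than the paper's superfluous appeal to axiom (D3).
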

 \begin{proof}
Clearly, for every $n\in \mathbb{N}$, $T_F^n(z^*)$ is comparable to $w=T_F^n(w)$ as well as to $\tilde{z}=T_F^n(\tilde{z})$. By contraction principle, we obtain $$D_+(T_F(w),T_F(z^*))\leq k D_+(w,z^*),$$ and
$$D_+(T_F^2(w),T_F^2(z^*))\leq k D_+(T_F(w),T_F(z^*))\leq k^2 D_+(w,z^*).$$ Proceeding in this way, one can obtain,
\begin{equation}
D_+(T_F^n(w),T_F^n(z^*))\leq k^n D_+(w,z^*).\label{*}
\end{equation}
By using the axioms of $D_+$-JS-metric spaces and the above inequality, we have
$$D_+(w,T_F^n(z^*))\leq c \limsup D_+(T_F^n(w),T_F^n(z^*))\leq k^nc D_+(w,z^*).$$ Since, $D_+(w,z^*)<\infty$  and $k\in [0,1)$, $D_+(w,T_F^n(z^*))\rightarrow 0$, whenever $n\rightarrow \infty$. This implies that the sequence $T_F^n(z^*)$ converges to $w$. 

Analogously, it can be prove that the sequence $T_F^n(z^*)$ also converges to $\tilde{z}$. In view of Proposition \ref{propos1} , we must have $\tilde{z}=w$, that is, $(\tilde{x},\tilde{y})=(u,v)$.
 \end{proof}

Next, we are interested to find additional conditions for the equality of the components of coupled fixed point. 
\begin{theorem}
Let $(\tilde{x},\tilde{y})$ be  coupled fixed point of $F$. Suppose every pair of elements in $X$ has either an upper bound or a lower bound in $X$. Then $\tilde{x}=\tilde{y}$.
\end{theorem}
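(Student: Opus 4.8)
The plan is to recast the statement in terms of the operator $T_F$ and its fixed points. Since $(\tilde{x},\tilde{y})$ is a coupled fixed point, both $A=(\tilde{x},\tilde{y})$ and its reflection $B=(\tilde{y},\tilde{x})$ are fixed points of $T_F$; indeed $T_F(\tilde{y},\tilde{x})=(F(\tilde{y},\tilde{x}),F(\tilde{x},\tilde{y}))=(\tilde{y},\tilde{x})=B$. Hence proving $\tilde{x}=\tilde{y}$ is equivalent to proving $A=B$, and I would split the argument according to whether $\tilde{x}$ and $\tilde{y}$ are comparable in $X$. Throughout I would use that $T_F$ is order preserving on $(X^2,\leq)$: if $(x,y)\leq(u,v)$ then the mixed monotonicity of $F$ gives $F(x,y)\leq F(u,v)$ and $F(y,x)\geq F(v,u)$, i.e. $T_F(x,y)\leq T_F(u,v)$.

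First, suppose $\tilde{x}$ and $\tilde{y}$ are comparable, say $\tilde{y}\leq\tilde{x}$ (the other case is symmetric). Then by the definition of the order on $X^2$ we have $B=(\tilde{y},\tilde{x})\leq(\tilde{x},\tilde{y})=A$, so $A$ and $B$ are two \emph{comparable} fixed points of $T_F$. Applying the contraction \ref{equ2.1} to this comparable pair gives $D_+(A,B)=D_+(T_F(A),T_F(B))\leq k\,D_+(A,B)$, and since $k\in[0,1)$ and (assuming $D_+(A,B)<\infty$) this forces $D_+(A,B)=0$, whence $A=B$ and $\tilde{x}=\tilde{y}$. This is exactly the comparable-uniqueness theorem already established, applied to $A$ and $B$.

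The substantial case is when $\tilde{x}$ and $\tilde{y}$ are incomparable. Here I would invoke the hypothesis to choose an upper or lower bound $z\in X$ of the pair $\{\tilde{x},\tilde{y}\}$; take $z$ to be an upper bound (the lower-bound case is analogous), so that $\tilde{x}\leq z$ and $\tilde{y}\leq z$. From these and $\tilde{x}=F(\tilde{x},\tilde{y})$, $\tilde{y}=F(\tilde{y},\tilde{x})$, I would record the comparabilities in $X^2$,
\[
(\tilde{x},z)\leq(\tilde{x},\tilde{y})=A,\qquad B=(\tilde{y},\tilde{x})\leq(z,\tilde{x}),\qquad (\tilde{x},z)\leq(z,\tilde{x}).
\]
Since $T_F$ preserves the order and $A,B$ are fixed, iterating \ref{equ2.1} along the first two relations yields $D_+(A,T_F^n(\tilde{x},z))\leq k^nD(\tilde{y},z)\to 0$ and $D_+(B,T_F^n(z,\tilde{x}))\leq k^nD(\tilde{y},z)\to 0$, i.e. $T_F^n(\tilde{x},z)\to A$ and $T_F^n(z,\tilde{x})\to B$ (using $D(\tilde{x},z),D(\tilde{y},z)<\infty$ and $D(\tilde{x},\tilde{x})=D(\tilde{y},\tilde{y})=0$ from Proposition \ref{propos2}); while \ref{equ2.1} on the comparable pair $(\tilde{x},z)\leq(z,\tilde{x})$ gives $D_+(T_F^n(\tilde{x},z),T_F^n(z,\tilde{x}))\leq k^n\,2D(\tilde{x},z)\to 0$.

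The main obstacle is the final combination. I now have one sequence $t_n=T_F^n(\tilde{x},z)$ converging to $A$, a second $s_n=T_F^n(z,\tilde{x})$ converging to $B$, and $D_+(s_n,t_n)\to 0$, from which I must deduce $A=B$. Because a JS-metric space carries no triangle inequality, I cannot simply add the two small quantities; instead I would feed $t_n\in C(D_+,X^2,A)$ into axiom (D3) to get $D_+(A,B)\leq c\limsup_n D_+(t_n,B)$, and then control $D_+(t_n,B)$ using $D_+(s_n,B)\to 0$ together with $D_+(s_n,t_n)\to 0$. Making this last estimate rigorous — rather than informally declaring that the three limit relations ``imply'' $A=B$ — is precisely where the weak axiom (D3) and the finiteness of the distances $D(\tilde{x},z),D(\tilde{y},z)$ must be used with care, and it is the only genuinely delicate point; once $D_+(A,B)=0$ is obtained, Proposition \ref{propos1} gives $A=B$, that is, $\tilde{x}=\tilde{y}$.
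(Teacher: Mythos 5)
Your Case 1 (comparable $\tilde x,\tilde y$) is exactly the paper's Case I: apply the contraction \ref{equ2.1} to $X=(\tilde x,\tilde y)$ and $U=(\tilde y,\tilde x)$ and conclude $D(\tilde x,\tilde y)\leq kD(\tilde x,\tilde y)$, hence $\tilde x=\tilde y$. The problem is your Case 2, and you have correctly located where it breaks; but what you call the ``only genuinely delicate point'' is in fact a gap your setup cannot close. You end with two \emph{different} sequences $t_n=T_F^n(\tilde x,z)\to A$ and $s_n=T_F^n(z,\tilde x)\to B$ together with $D_+(t_n,s_n)\to 0$. Axiom (D3) only yields $D_+(A,w)\leq c\limsup_nD_+(t_n,w)$ for a \emph{fixed} second argument $w$; to exploit it you would need control of $D_+(t_n,B)$, or at least of $D_+(t_m,s_n)$ for $m\neq n$, whereas your three estimates control only the diagonal terms $D_+(t_n,s_n)$. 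Without a triangle inequality there is no passage from ``$t_n\to A$, $s_n\to B$, $D_+(t_n,s_n)\to 0$'' to $A=B$; the device that works in this structure is to exhibit a \emph{single} sequence converging to both limits and then invoke Proposition \ref{propos1}, which your two-orbit construction does not provide.

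The paper argues the second case quite differently and thereby avoids the combination problem: it does not use the bound $z$ at all, but takes the Picard iterates $x_n=F(x_{n-1},y_{n-1})$, $y_n=F(y_{n-1},x_{n-1})$ from a comparable initial pair $(x_0,y_0)$ with $D(x_0,y_0)<\infty$ (the pair from Theorem \ref{thm2}), so that $x_n\to\tilde x$, $y_n\to\tilde y$, the contraction gives $D(x_n,y_n)\leq k^nD(x_0,y_0)\to 0$, and then $D(\tilde x,\tilde y)\leq c\limsup D(x_n,y_n)=0$. This is a one-sequence argument in which the quantity tending to zero is exactly the one fed into the limsup. (To be fair, the paper's own proof never actually invokes the stated upper/lower-bound hypothesis, and its inequality $D(\tilde x,\tilde y)\leq c\limsup D(x_n,y_n)$ itself stretches (D3), which is stated for a fixed second argument; still, to repair your attempt you should either follow that route or supply the missing lemma converting your three diagonal estimates into $D_+(A,B)=0$.)
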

\begin{proof} We prove this theorem in two  possible ways.

\noindent{\textbf{Case-I.}} Let $(\tilde{x},\tilde{y})$ be coupled fixed point of $F$ such that $\tilde{x}$ and $\tilde{y}$ are comparable in $X$ with $D(\tilde{x},\tilde{y})<\infty.$ We consider $X=(\tilde{x},\tilde{y})$ and $U=(\tilde{y},\tilde{x})$. Using the contraction principle in Theorem \ref{thm2}, we get,
\begin{eqnarray}
&&D_+(T_F(X), T_F(U))\leq k D_+(X,U)\nonumber\\
&&\Rightarrow D_+((F(\tilde{x},\tilde{y}),F(\tilde{y},\tilde{x})), (F(\tilde{y},\tilde{x}),F(\tilde{x},\tilde{y}))\leq k D_+((\tilde{x},\tilde{y}),(\tilde{y},\tilde{x}))\nonumber\\
&&\Rightarrow D(F(\tilde{x},\tilde{y}),F(\tilde{y},\tilde{x}))+D(F(\tilde{y},\tilde{x}),F(\tilde{x},\tilde{y}))\leq k (D(\tilde{x},\tilde{y})+D(\tilde{y},\tilde{x}))\nonumber\\
&& \Rightarrow D(F(\tilde{x},\tilde{y}),F(\tilde{y},\tilde{x}))\leq k D(\tilde{x},\tilde{y})\nonumber\\
&& \Rightarrow D(\tilde{x},\tilde{y})\leq k D(\tilde{x},\tilde{y})\nonumber\\
&& \Rightarrow D(\tilde{x},\tilde{y})=0~ i.e. ~\tilde{x}=\tilde{y}.\nonumber
\end{eqnarray}
\textbf{Case-II}.
Let $x_0,y_0$ are comparable with $D(x_0,y_0)<\infty$. Due to mixed monotonicity of $F$, for each $n\geq 1$, $x_n=F(x_{n-1},y_{n-1})$ and $y_n=F(y_{n-1},x_{n-1})$ are also comparable and $x_n\rightarrow \tilde{x}$ and $y_n\rightarrow \tilde{y}$ as $n\Rightarrow \infty$. By the axioms of JS-metric spaces, we obtain
\begin{equation}
D(x,y)\leq c \limsup D(x_n,y_n). \label{**}
\end{equation}
Again, by taking $X=(x_n,y_n)$ and $U=(y_n,x_n)$ in the contraction condition of Theorem \ref{thm2}, for all $n\geq 0$, we get
\begin{eqnarray}
&&D(F(x_n,y_n),F(y_n,x_n)) \leq k D(x_n,y_n)\nonumber\\
&& \Rightarrow D(x_{n+1}, y_{n+1})\leq k  D(x_n,y_n).\label{***}
\end{eqnarray}
Using Inequalities  \ref{**} and \ref{***}, we must have
$$D(\tilde{x},\tilde{y})\leq c \limsup D(x_n,y_n) \leq k^n cD(x_0,y_0)= 0$$ as $n\rightarrow \infty.$ This implies that $D(\tilde{x},\tilde{y})=0$. Hence, we obtain, $\tilde{x}=\tilde{y}$.
\end{proof}
\vskip.5cm\noindent{\bf Acknowledgement}\\
 This work is funded by DST-INSPIRE, New Delhi, India under INSPIRE fellowship scheme (No. DST/INSPIRE FELLOWSHIP/2013/636). The support is gratefully acknowledged.


\begin{thebibliography}{10}

\bibitem{berd}
V.~Berinde.
\newblock Generalized coupled fixed point theorems for mixed monotone mappings
  in partially ordered metric spaces.
\newblock {\em Nonlinear Anal.}, 74:7347--7355, 2011.

\bibitem{bhas}
T.~G. Bhaskar and V.~Lakshmikantham.
\newblock Fixed point theorems in partially ordered metric spaces and
  applications.
\newblock {\em Nonlinear Anal.}, 65:1379--1393, 2006.

\bibitem{guo}
D.~Guo and V.~Lakshmikantham.
\newblock Coupled fixed points of nonlinear operators with applications.
\newblock {\em Nonlinear Anal.}, 11:623--632, 1987.

\bibitem{isik2}
H.~Isik and S.~Radenovi$\acute{c}$.
\newblock A new version of coupled fixed point results in ordered metric spaces
  with applications.
\newblock To appear in U.P.B. Sci. Bull., Series A, 2016.

\bibitem{isik1}
H.~Isik and D.~Turkoglu.
\newblock Coupled fixed point theorems for new contractive mixed monotone
  mappings and applications to integral equations.
\newblock {\em Filomat}, 28(6):1253--1264, 2014.

\bibitem{jleli}
M.~Jleli and B.~Samet.
\newblock A generalized metric space and related fixed point theorems.
\newblock {\em Fixed Point Theory Appl.}, 2015(61), 2015.

\bibitem{lahiri}
B.K. Lahiri, P.~Das, and L.K. Dey.
\newblock Cantor's theorem in 2-metric spaces and its applications to fixed
  point problems.
\newblock {\em Taiwanese J. Math.}, 15(1):337--352, 2011.

\bibitem{laks}
V.~Lakshmikantham and LB. $\acute{C}$iri$\acute{c}$.
\newblock Coupled fixed point theorems for nonlinear contractions in partially
  ordered metric spaces.
\newblock {\em Nonlinear Anal.}, 70:4341--4349, 2009.

\bibitem{stoj1}
S.~Radenovi$\acute{c}$.
\newblock Coupled fixed point theorems for monotone mappings in partially
  ordered metric spaces.
\newblock {\em Krag. J. Math.}, 38(2):249--257, 2014.

\bibitem{stoj2}
S.~Radenovi$\acute{c}$.
\newblock Remarks on some coupled coincidence point results in partially
  ordered metric spaces.
\newblock {\em Arab. J. Math. Sci.}, 20(1):29--39, 2014.

\bibitem{sena1}
T.~Senapati and L.K. Dey.
\newblock Common fixed point theorems for multivalued \(\beta_*-\psi
  \)-contractive mappings.
\newblock {\em Thai J. Math.}, 2015.

\bibitem{sena2}
T.~Senapati, L.K. Dey, and D.~Dolicanin-Djeki$\acute{c}$.
\newblock Extension of $\acute{C}$iri$\acute{c}$ and {W}ardowski type fixed
  point theorems in {D}-generalized metric spaces.
\newblock {\em Fixed Point Theory Appl.}, 2016(33), 2016.

\end{thebibliography}
\end{document}